\journal{Journal of Computational Physics}
\newtheorem{theorem}{Theorem}
\newtheorem{corollary}{Corollary}
\theoremstyle{definition}
\newtheorem{defn}{Definition}
\begin{document}
%\linenumbers
\begin{frontmatter}

\title{Mesh Adaptation on the Sphere using Optimal Transport and the Numerical Solution of a Monge-Amp\`ere type Equation}

%% use optional labels to link authors explicitly to addresses:
\author[label1]{Hilary Weller}
\ead{h.weller@reading.ac.uk}
\author[label1]{Philip Browne}
\author[label2]{Chris Budd}
\author[label3]{Mike Cullen}
\address[label1]{Meteorology, University of Reading, UK}
\address[label2]{University of Bath, UK}
\address[label3]{Met Office, UK}

\begin{abstract}

Graphical abstract\\
\includegraphics[width=\linewidth]{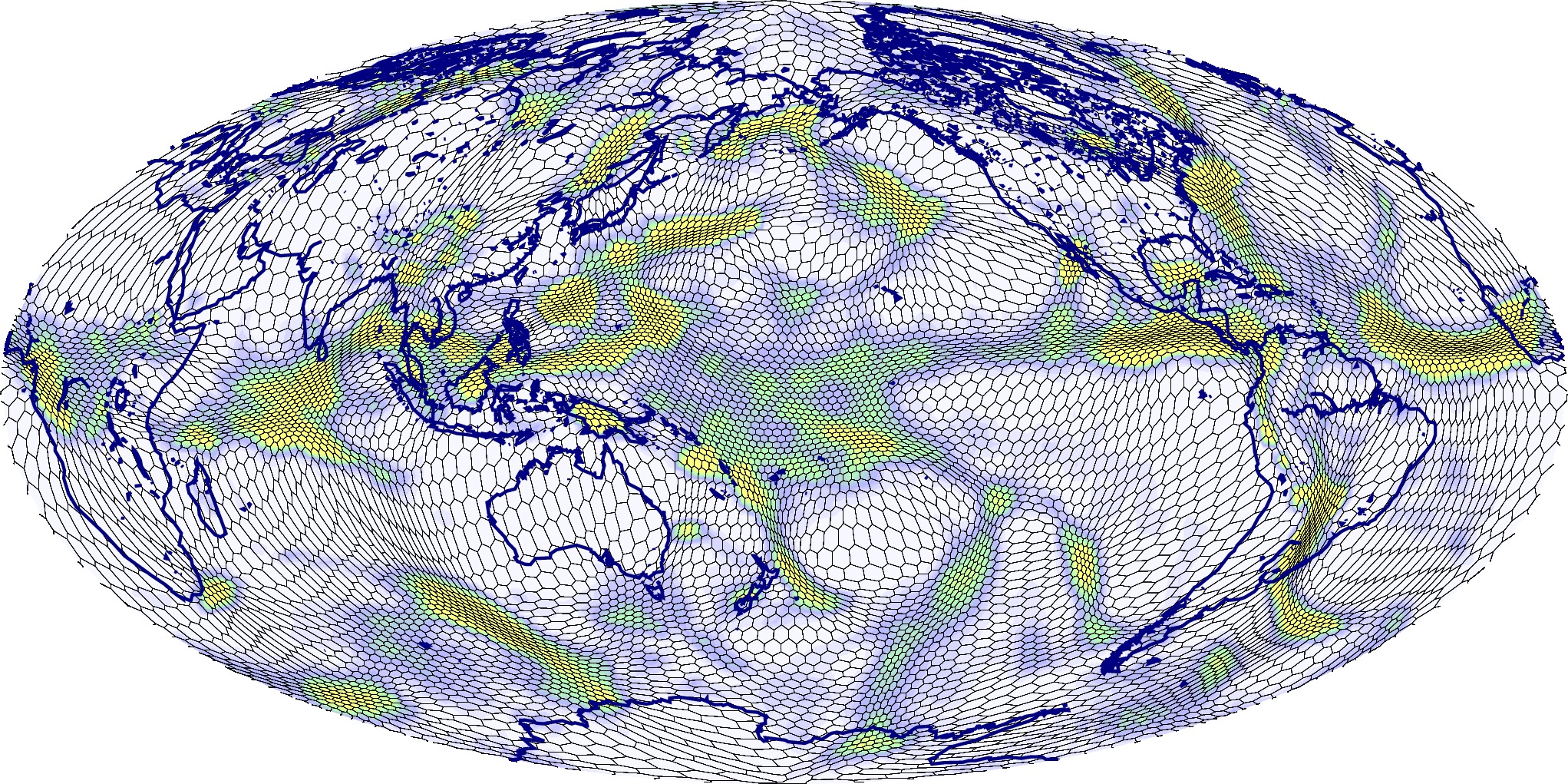}

An equation of Monge-Amp\`ere type has, for the first time, been solved numerically on the surface of the sphere in order to generate optimally transported (OT) meshes, equidistributed with respect to a monitor function. Optimal transport generates meshes that keep the same connectivity as the original mesh, making them suitable for r-adaptive simulations, in which the equations of motion can be solved in a moving frame of reference in order to avoid mapping the solution between old and new meshes and to avoid load balancing problems on parallel computers. 

The semi-implicit solution of the Monge-Amp\`ere type equation involves a new linearisation of the Hessian term, and exponential maps are used to map from old to new meshes on the sphere. The determinant of the Hessian is evaluated as the change in volume between old and new mesh cells, rather than using numerical approximations to the gradients. 

OT meshes are generated to compare with centroidal Voronoi tesselations on the sphere and are found to have advantages and disadvantages; OT equidistribution is more accurate, the number of iterations to convergence is independent of the mesh size, face skewness is reduced and the connectivity does not change. However anisotropy is higher and the OT meshes are non-orthogonal.

It is shown that optimal transport on the sphere leads to meshes that do not tangle. However, tangling can be introduced by numerical errors in calculating the gradient of the mesh potential. Methods for alleviating this problem are explored. 

Finally, OT meshes are generated using observed precipitation as a monitor function, in order to demonstrate the potential power of the technique.

\end{abstract}

\begin{keyword}
Optimal Transport \sep
Adaptive \sep
Mesh \sep
Refinement \sep
Mesh generation \sep
Monge-Amp\'ere \sep
Atmosphere \sep
Modelling
\end{keyword}

\end{frontmatter}

\section{Introduction}

The need to represent scale interactions in weather and climate prediction
models has, for many decades, motivated research into the use of adaptive
meshes \citep{BO84,SK93,Wan01}. R-adaptivity - mesh \uline{r}edistribution
- involves deforming a mesh in order to vary local resolution and
was first considered for atmospheric modelling more than twenty years
ago by \citet{DD92}. It is an attractive form of adaptivity since
it does not involve altering the mesh connectivity, does not create
load balancing problems because points are never created or destroyed,
does not require mapping of solutions between meshes \citep{LTZ01},
does not lead to sudden changes in resolution and can be retro-fitted
into existing models. Variational methods exist which attempt to control
resolution in different directions for r-adaptive \marginpar{Rev2.2}meshes
\citep[eg][]{Hua01,KSD12}. Alternatively, the solution of the Monge-Amp\`ere
equation to generate an optimally transported (OT) mesh based on a
scalar valued monitor function is a useful form of r-adaptive mesh
generation because it generates a mesh equidistributed with respect
to a monitor function and does not lead to mesh tangling \citep{BHR09}.
We will see that the optimal transport problem on the sphere leads
to a slightly different equation of Monge-Amp\`ere type, which has
not before been solved numerically on the surface of a sphere, which
would be necessary for weather and climate prediction using r-adaptivity.

At first glance, r-adaptivity does not look ideal for adaptive meshing
of the global atmosphere; \citet{DD92} pointed out that the resulting
meshes can be quite distorted which leads to truncation errors and
it is not always possible to control the resolution in individual
directions, just the total cell size (area or volume); with r-adaptivity,
it is not possible, for example, to increase the total number of points
around the equator, just re-distribute them \citep{FT93}. However,
if the mesh redistribution starts from a mesh with enough points around
the equator, then these points can be redistributed according to transient
features of the flow.

Models of the global atmosphere are being developed with accurate
treatment of non-orthogonality and which allow arbitrary grid structures
\citep{GR04,MTN+11,CS12,MSC14,Wel14}. The time may therefore be right
to reconsider r-adaptive modelling of the global atmosphere.

A powerful form of adaptivity that, like r-adaptivity, retains the
same total number of points, is centroidal Voronoi tesselation using
a non-uniform density (or monitor) function to control the mesh spacing,
using Lloyd's algorithm \citep{RJG+11}. Lloyd's algorithm generates
smoothly varying, orthogonal, near centroidal isotropic meshes suitable
for finite-volume models and is being used by the Model for Prediction
Across Scales \citep[MPAS,][]{SKD+12}. Lloyd's algorithm alters the
mesh connectivity meaning that, if it is used in conjunction with
dynamic mesh adaptivity, mapping between old and new solutions is
needed and there is an additional layer of complexity involved with
changing the data structures and moving information between parallel
processors. Also, Lloyd's algorithm is extremely expensive, using
an explicit solution to find an equidistributed mesh - an elliptic
problem. The cost per iteration is proportional to the number of points,
$N$, \citep{JGR+13} and, in one dimension, the number of iterations
is proportional to $N$ \citep{DFG99}. Therefore, overall, the cost
is proportional to $N^{2}$. Conversely, generating optimally transported
meshes \marginpar{Rev3.1}using a semi-implicit technique, has convergence
independent of the mesh size and the overall cost is proportional
to $N\log N$ \citep{BBPC14}. We therefore propose r-adaptivity which
uses cheaper mesh generation and fixed data structures associated
with the mesh.

In section \ref{sec:OT} we describe mesh generation by optimal transport
in Euclidean space leading to a Monge-Amp\`ere equation. We then
show how these concepts can extend to mesh generation on the sphere,
leading to an equation of Monge-Amp\`ere type. Existing numerical
solution techniques in Euclidean geometry are reviewed in section
\ref{sec:review}. In section \ref{sec:method} we describe the new
numerical methods for solving the Monge-Amp\`ere type equations,
both on a Euclidean plane and on the sphere. In order to address issues
of mesh distortion, a range of diagnostics of mesh quality are presented.
These diagnostics, along with the diagnostics of solution convergence,
are described in section \ref{sec:diags} and the diagnostics of the
meshes generated are presented in section \ref{sec:results}. The
meshes generated, both on the plane and on the sphere, are shown and
described in section \ref{sec:results} and the meshes on the sphere
are compared with centroidal Voronoi meshes generated using Lloyd's
algorithm \citep{RJG+11} with the same monitor function. In order
to demonstrate the performance of the mesh generation using real data
as a monitor function, meshes are generated using a monitor function
derived from reanalysis precipitation in section \ref{sec:results}.
Final conclusions and recommendations for future work are drawn in
section \ref{sec:concs}.

\section{Mesh Generation by Optimal Transport (OT) \label{sec:OT}}

\subsection{Optimally Transported Meshes in Euclidean Space}

A mesh is equidistributed with respect to a monitor function when
the product of the cell volumes and the monitor function in the cell
is constant across all mesh cells. The equidistribution principle
alone does not lead to a well-defined problem for mesh generation.
Indeed this problem is ill-posed in more than 1 dimension and so requires
the imposition of an extra constraint. \citet{BW06} introduced optimal
transport for mesh generation to find a map from the original mesh
(or computational space, $\Omega_{c}$) to the new mesh (or physical
space, $\Omega_{p}$). This technique was further developed by \citet{BHR09}
and extended to 3 spatial dimensions by \citet{BBPC14}. The \textit{optimal
transport }constraint says that the new mesh should be as close as
possible to the original mesh - we seek to minimise the distance between
the two meshes in a certain measure which we shall discuss. We write
this minimization problem:
\begin{equation}
\min_{\mathbf{x}\in\Omega_{p}}d(\bm{\xi},\mathbf{x})^{2}
\end{equation}
where $d$ is the distance metric between the two meshes and $\bm{\xi}\in\Omega_{c}$
maps to $\mathbf{x}\in\Omega_{p}$. In Cartesian space $[0,1]^{n}$
this metric can simply be the sum of the Euclidean distance between
all of the corresponding points defining the meshes. Brenier's theorem
\citep{brenier1991} then tells us that the unique, optimal transport
map from $\mathbf{x}$ to $\bm{\xi}$ is the gradient of a convex
scalar potential, $\phi$, so that the new mesh locations are given
by: 
\begin{equation}
{\bf x}=\bm{\xi}+\nabla\phi.\label{eq:meshMap}
\end{equation}
The change in cell volume under the coordinate transform is given
by the determinant of the Jacobian of the map, $|J(\xi)|=|\mathbf{\nabla\mathbf{x}}(\bm{\xi})|$,
the gradient of $\mathbf{x}$ with respect to $\bm{\xi}$. Therefore,
for equidistribution with respect to a monitor function, $m$, the
new mesh locations should satisfy
\begin{equation}
|\nabla\mathbf{x}|\ m\left(\mathbf{x}\right)=c\label{eq:equiDist}
\end{equation}
where $c$ is a constant, uniform over space, which will be determined
once the numerical method is defined. Taking the determinant of the
gradient of eqn. (\ref{eq:meshMap}), we can see that $|\nabla\mathbf{x}|=|I+\nabla\nabla\phi|=|I+H\left(\phi\right)|$
where $I$ is the identity tensor and $H$ is the Hessian. Consequently,
for the mesh to be optimally transported and equidistributed, the
mesh potential, $\phi$, must satisfy a Monge-Amp\`ere equation:
\begin{equation}
|I+H\left(\phi\right)|m({\bf x})=c.\label{eq:MA}
\end{equation}
The presence of the identity tensor in this Monge-Amp\`ere equation
will be exploited in the linearisation to create a novel numerical
algorithm. 

Mesh tangling is caused by a local loss of invertibility of the Jacobian
of the map from the original to the tranported mesh. \marginpar{Rev3.2}Given
that the solution of the Monge-Amp\`ere equation, $\phi$, is convex,
the determinant of the Hessian of $\phi$ is positive and hence the
Jacobian determinant of the map is positive and thus is invertible
and the mesh will not tangle \citep{BHR09}.

\subsection{Optimally Transported Meshes on the Sphere}

A naive approach to r-adaptivity on the sphere, $\mathbb{S}^{2}$,
would be to map the surface onto the plane, use an established method
to solve a mesh redistribution problem on the plane, then map back
to the sphere. As shown in Figure \ref{commutative_diagram}, the
desired map $T$ could be written as a composition of mappings as
$T=g^{-1}\circ t\circ g$.

\begin{figure}
\begin{centering}
\begin{tikzpicture}
  \matrix (m) [matrix of math nodes,row sep=3em,column sep=4em,minimum width=2em]
  {
     \mathbb{S}^2 & \mathbb{S}^2 \\
     \mathbb{R}^2 & \mathbb{R}^2 \\};
  \path[-stealth]
    (m-1-1) edge node [left] {$g$} (m-2-1)
    (m-1-1) edge [double] node [below] {$T$} (m-1-2)
    (m-2-2) edge node [right] {$g^{-1}$} (m-1-2)
    (m-2-1) edge [double] node [below] {$t$} (m-2-2);
\end{tikzpicture}
\par\end{centering}

\caption{%
Commutative diagram showing an naive approach to meshing on the sphere
by converting the problem to the plane%
}

\label{commutative_diagram}
\end{figure}

A map $g:\mathbb{S}^{2}\to\mathbb{R}^{2}$ must be chosen and an optimal
transport map $t$ found. The boundary conditions for the problem
of finding $t$ must be specified, and those boundary conditions would
necessarily depend on $g$. For example in the case where the mapping
$g$ is simply the lat-lon decomposition of $\mathbb{S}^{2}$, the
boundary conditions for the mesh redistribution problem on the plane
will then be periodic in the zonal direction. In the the meridional
direction, Neumann boundary conditions would not be appropriate as
the poles will not be free to move and they will be mapped back to
their original location under $g^{-1}$.

The Hairy Ball Theorem tells us that there must be at least one fixed
point of the map $T:\mathbb{S}^{2}\to\mathbb{S}^{2}$. The decomposition
$T=g^{-1}\circ t\circ g$ would then be possible if $g$ maps the
fixed points of $T$ to a Neumann boundary of $\mathbb{R}^{2}$. However
the location of the fixed points of $T$ are not known \textit{a priori},
and hence choosing $g$ appropriately would form a significant problem
by itself. Hence we will seek a direct optimal transport map, $T:\mathbb{S}^{2}\to\mathbb{S}^{2}$
which will be described in this section.

On the surface of the sphere, we would still like to define an optimally
transported mesh satisfying equidistribution:
\begin{equation}
r\left(\phi\right)m\left(\bm{x}\right)=c\label{eq:MAsph}
\end{equation}
\marginpar{Rev3.3}where $r=V_{\xi}/V_{x}$ is the ratio of the volumes
of the original mesh cells, $V{}_{\xi}$, with vertices at positions
$\bm{\xi}$, and the volumes of the new mesh cells, $V_{x}$, with
vertices at positions $\bm{x}$. We need to ascertain if unique solutions
of (\ref{eq:MAsph}) exist which minimise the distance between the
original and resulting meshes. On the sphere $\mathbb{S}^{2}$, the
appropriate distance metric is the Riemannian distance on the surface
of the sphere between all of the corresponding points defining the
meshes. We cannot use Brenier's theorem on the sphere. Instead, we
appeal to the generalised version of Brenier's theorem given by \citet{McC01},
a detailed discussion of which is given in \citet{villani2003}.

\begin{defn}{[}$c$-convex function{]} The $c$-transform $\phi^{c}$
of a function $\phi:\mathbb{S}^{2}\to\mathbf{\mathbb{R}}$ is defined
as 
\begin{equation}
\phi^{c}(y)=\sup_{\bm{\xi}\in\mathbf{\mathbb{S}}^{2}}\{-c(\bm{\xi},\mathbf{x})-\phi(\bm{\xi})\}.
\end{equation}
The function $\phi$ is said to be \emph{$c$-convex}, or cost-convex,
if $(\phi^{c})^{c}=\phi.$ \end{defn}

\begin{theorem}\label{thm:mccannthm} \marginpar{Ed4}(A combination
of theorems 8 and 9 of\citep{McC01}) Let $M$ be a connected, complete
smooth Riemannian manifold, equipped with its standard volume measure
$\mathrm{d}x$. Let $\mu,\nu$ be two probability measures on $M$
with compact support, and let the objective function $c(\bm{\xi},\mathbf{x})$
be equal to $d(\bm{\xi},\mathbf{x})^{2}$, where $d$ is the geodesic
distance on $M$. Further, assume that $\mu$ is absolutely continuous
with respect to the volume measure on $M$. Then, the Monge-Kantorovich
mass transportation problem between $\mu$ and $\nu$ admits a unique
optimal transported map $T$ where $T$ pushes forward the measure
$\mu$ onto $\nu$. Then, (using classical optimal transport notation):
\begin{equation}
T_{\#}\mu=\nu\label{eq:t1}
\end{equation}
such that 
\begin{equation}
\mathbf{x}=T(\bm{\xi})=\exp_{\bm{\xi}}[\nabla\phi(\bm{\xi})]\label{eq:t2}
\end{equation}
for some $d^{2}/2$-convex potential $\phi$. \end{theorem}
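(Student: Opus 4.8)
The plan is to treat Theorem~\ref{thm:mccannthm} as the Riemannian analogue of Brenier's theorem and to recover the structure of the optimal map through Kantorovich duality together with a first-order optimality condition, rather than attacking the Monge problem directly. First I would relax the Monge problem to the Kantorovich problem of minimising $\int_{M\times M} c(\bm{\xi},\mathbf{x})\,\mathrm{d}\pi$ over all couplings $\pi$ of $\mu$ and $\nu$; since $c=d^{2}$ is continuous and both measures have compact support, a minimising plan $\pi$ exists by weak-$\ast$ compactness and lower semicontinuity. Kantorovich duality then furnishes a pair of conjugate potentials, which may be normalised so that the optimal $\phi$ is $c$-convex (equal to its own double $c$-transform, in the sense of the Definition above) and so that the support of $\pi$ is $c$-cyclically monotone and contained in the $c$-superdifferential of $\phi$.

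The second step is the geometric identification that converts this abstract optimality condition into the exponential-map formula. For $\mathbf{x}$ outside the cut locus of $\bm{\xi}$ the squared distance is smooth in $\bm{\xi}$, and a direct computation gives $\nabla_{\bm{\xi}}\,\tfrac12 d^{2}(\bm{\xi},\mathbf{x}) = -\exp_{\bm{\xi}}^{-1}(\mathbf{x})$, minus the initial velocity of the minimising geodesic from $\bm{\xi}$ to $\mathbf{x}$; I would work with the normalisation $c=\tfrac12 d^{2}$ for which~\eqref{eq:t2} holds. By the $c$-convexity obtained above, every pair $(\bm{\xi},\mathbf{x})$ in the support of $\pi$ minimises the map $\bm{\xi}\mapsto c(\bm{\xi},\mathbf{x})+\phi(\bm{\xi})$, so wherever $\phi$ is differentiable this gradient vanishes and $\nabla\phi(\bm{\xi}) = \exp_{\bm{\xi}}^{-1}(\mathbf{x})$. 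Inverting the exponential map yields $\mathbf{x}=\exp_{\bm{\xi}}[\nabla\phi(\bm{\xi})]$, which is exactly~\eqref{eq:t2} and shows the plan is carried by the graph of a single-valued map $T$ at every such $\bm{\xi}$.

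The third step is to upgrade this pointwise statement to a full solution using the absolute continuity of $\mu$. A $c$-convex function for a squared-distance cost is locally semiconcave, hence locally Lipschitz, and therefore differentiable outside a set of volume measure zero by Rademacher's theorem on manifolds; since $\mu\ll\mathrm{d}x$, the map $T$ is defined and single-valued $\mu$-almost everywhere. This gives the push-forward relation~\eqref{eq:t1}, $T_{\#}\mu=\nu$, reducing the optimal plan to a genuine Monge map. Uniqueness then follows because any optimal plan must be supported in the $c$-superdifferential of the (essentially unique) potential $\phi$, which collapses to the graph of $T$ $\mu$-almost everywhere.

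The hard part will be controlling the behaviour at the cut locus, where $d^{2}$ fails to be differentiable --- on $\mathbb{S}^{2}$ this is precisely the antipodal point of each $\bm{\xi}$. Establishing that $\phi$ is differentiable $\mu$-almost everywhere, and that the optimal plan does not concentrate mass on pairs $(\bm{\xi},\mathbf{x})$ with $\mathbf{x}$ in the cut locus of $\bm{\xi}$, is the delicate point; it is handled by exploiting the one-sided (semiconcavity) regularity that survives even on the cut locus together with the absolute continuity of $\mu$, and this is where the full strength of McCann's analysis in \citep{McC01} is required.
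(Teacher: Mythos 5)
Your sketch is correct, but note that the paper does not prove this statement at all: it is imported verbatim as a combination of Theorems 8 and 9 of \citet{McC01}, with the burden of proof deferred entirely to that reference. The argument you outline --- Kantorovich relaxation and duality producing a $c$-convex potential, the identity $\nabla_{\bm{\xi}}\tfrac{1}{2}d^{2}(\bm{\xi},\mathbf{x})=-\exp_{\bm{\xi}}^{-1}(\mathbf{x})$ away from the cut locus, Rademacher differentiability of the locally semiconcave potential combined with $\mu\ll\mathrm{d}x$, and the delicate one-sided regularity analysis at the cut locus --- is precisely the structure of McCann's own proof, so you have faithfully reconstructed the cited argument rather than found a genuinely different route.
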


\begin{corollary} There exists a unique, optimally transported mesh
on the sphere that satisfies the equidistribution principle. Moreover,
that mesh is defined by a $c$-convex scalar potential function that
\marginpar{Rev1.2} satisfies the Monge-Amp\`ere type equation
\begin{equation}
m\left(\exp_{\xi}[\nabla\phi(\bm{\xi})]\right)|J(\bm{\xi})|=c.
\end{equation}
 \end{corollary}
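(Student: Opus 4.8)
The plan is to obtain both assertions as consequences of Theorem~\ref{thm:mccannthm}, by choosing the two probability measures so that the pushforward relation becomes the equidistribution condition, and then converting that relation into the stated Monge-Amp\`ere type equation by a change of variables. The manifold $M=\mathbb{S}^{2}$ is connected, complete and smooth, and the cost is $c(\bm{\xi},\mathbf{x})=d(\bm{\xi},\mathbf{x})^{2}$ with $d$ the geodesic distance, so the structural hypotheses of the theorem are satisfied at once.

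First I would fix the measures. Let $\mu$ be the normalised volume measure on $\mathbb{S}^{2}$, representing the uniform original mesh; it is absolutely continuous with respect to the volume measure, as the theorem requires. Let $\nu$ be the probability measure whose density is proportional to the monitor function, so that image points are dense where $m$ is large. Since $\mathbb{S}^{2}$ is compact, both measures have compact support provided $m$ is positive and integrable. Theorem~\ref{thm:mccannthm} then supplies a unique optimal map $T$ with $T_{\#}\mu=\nu$ and $\mathbf{x}=T(\bm{\xi})=\exp_{\bm{\xi}}[\nabla\phi(\bm{\xi})]$ for a $d^{2}/2$-convex (hence $c$-convex) potential $\phi$. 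Because $T$ is unique up to $\mu$-null sets, the transported mesh --- whose vertices are the images of the original vertices and whose connectivity is inherited --- exists and is unique, which is the first claim.

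Next I would convert the pushforward identity (\ref{eq:t1}) into the equidistribution equation. Writing $\rho_{\mu}$ and $\rho_{\nu}$ for the densities, $T_{\#}\mu=\nu$ is equivalent by the change-of-variables formula to $\rho_{\nu}(T(\bm{\xi}))\,|J(\bm{\xi})|=\rho_{\mu}(\bm{\xi})$ holding $\mu$-almost everywhere, where $|J(\bm{\xi})|=|\det D_{\bm{\xi}}T|$ is the Jacobian determinant of the forward map. Substituting $\rho_{\mu}\equiv\mathrm{const}$ and $\rho_{\nu}\propto m$ and absorbing the normalising factors into the constant gives
\begin{equation}
m\left(\exp_{\bm{\xi}}[\nabla\phi(\bm{\xi})]\right)\,|J(\bm{\xi})|=c,
\end{equation}
with $c$ the mean of $m$ over the sphere. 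This is exactly the equidistribution statement: for a uniform original mesh $|J|$ is the ratio of new to old cell volume, so the new cell volume scales as $1/m$. The $d^{2}/2$-convexity of $\phi$ furnishes a non-negative Jacobian, so the map is orientation-preserving and the mesh does not tangle, exactly as convexity did in the Euclidean case.

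The step I expect to be the main obstacle is making the change of variables rigorous rather than formal. It requires that $T=\exp[\nabla\phi]$ be differentiable $\mu$-almost everywhere, which follows from the almost-everywhere second differentiability of a $d^{2}/2$-convex potential (the analogue on the sphere of Alexandrov's theorem for convex functions), and that the density relation be read pointwise in the Monge-Amp\`ere sense rather than merely weakly. A genuinely classical solution would need higher regularity of $\phi$; on $\mathbb{S}^{2}$ this is available for positive smooth $m$ because the sphere with the squared-distance cost satisfies the Ma--Trudinger--Wang condition, but for the purpose of defining a mesh the almost-everywhere relation delivered directly by the theorem already suffices, so the remaining regularity results can be quoted rather than reproved.
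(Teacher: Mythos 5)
Your proposal is correct and follows essentially the same route as the paper: the identical pair of measures (normalised volume measure and monitor-weighted volume measure) is fed into Theorem~\ref{thm:mccannthm}, and the Monge-Amp\`ere type equation with $c=\int_{\mathbb{S}^{2}}m\,\mathrm{d}x/\int_{\mathbb{S}^{2}}\mathrm{d}x$ is then extracted by a change of variables. The only difference is presentational --- the paper integrates over an arbitrary cell $A_{\xi}$, reads off discrete equidistribution, and passes to the pointwise equation because the cell is arbitrary, whereas you write the pushforward density identity $\rho_{\nu}(T(\bm{\xi}))\,|J(\bm{\xi})|=\rho_{\mu}(\bm{\xi})$ directly; your closing remarks on almost-everywhere differentiability and regularity address rigor the paper leaves implicit.
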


\begin{proof} Clearly $M=\mathbb{S}^{2}$ satisfies the conditions
on $M$ in Theorem \ref{thm:mccannthm}. The first probability measure
of interest, $\mu$, we define to be the scaled Lebesgue measure such
that:
\begin{equation}
\mathrm{d}\mu=\frac{\mathrm{d}x}{\int_{\mathbb{S}^{2}}\mathrm{d}x}.
\end{equation}
The target probability measure, $\mu$, is the Lebesgue measure appropriately
scaled by the monitor function to be equidistribed, such that:
\begin{equation}
\mathrm{d}\nu=\frac{m(\mathbf{x})\mathrm{d}x}{\int_{\mathbb{S}^{2}}m(\mathbf{x})\ \mathrm{d}x}.
\end{equation}
As $M=\mathbb{S}^{2}$ these are trivially compactly supported. $\mu$
is absolutely continuous. Hence by Theorem \ref{thm:mccannthm} we
have that there exists a unique solution, $T$, to the mass transportation
problem between $\mu$ and $\nu$. From (\ref{eq:t2}) we can see
that any point in the new mesh, $\mathbf{x}$, is defined by the action
of the exponential map on the scalar potential, $\phi$.

To see that this map, $T$, will give a mesh that satisfies the equidistribution
principle, consider a cell $A_{\xi}$ in the original computational
mesh, $\Omega_{C}$ with volume $V_{\xi}$. The mapping of the cell
under $T$ gives the new cell, $A_{x}$ in the physical mesh $\Omega_{p}$.
As $T$ is a (optimal) transport map, then the integral over a set
with respect to the measure $\mu$ equals the integral over the image
of that set with respect to $\nu$. Hence:
\begin{equation}
\int_{A_{\xi}}\mathrm{d}\mu=\int_{A_{x}}\mathrm{d}\nu\implies\frac{V_{\xi}}{\int_{\mathbb{S}^{2}}\mathrm{d}x}=\frac{\int_{A_{x}}m(\mathbf{x})\ \mathrm{d}x}{\int_{\mathbb{S}^{2}}m(\mathbf{x})\ \mathrm{d}x}.
\end{equation}
The ratio of the integral of the monitor function over the new cell
with the total integral of the monitor function is equal to the proportion
of the volume that the original cell occupied in the original mesh.
This is precisely what it means for the monitor function to be equidistributed
on a discretised mesh. 

Using a change of variables, we have:
\begin{equation}
\frac{V_{\xi}}{\int_{\mathbb{S}^{2}}\mathrm{d}x}=\frac{\int_{A_{x}}m(\mathbf{x})\ \mathrm{d}x}{\int_{\mathbb{S}^{2}}m(\mathbf{x})\ \mathrm{d}x}=\frac{\int_{A_{\xi}}m(\exp_{\xi}[\nabla\phi(\bm{\xi})])|J(\bm{\xi})|\ \mathrm{d}\xi}{\int_{\mathbb{S}^{2}}m(\mathbf{x})\ \mathrm{d}x}\label{eq:cov}
\end{equation}
where $|J(\bm{\xi})|$ is the determinant of the Jacobian of the map
$T(\bm{\xi})=\exp_{\xi}[\nabla\phi(\bm{\xi})]$.

As (\ref{eq:cov}) must hold for arbitrary sets $A_{\xi}\in\Omega_{C}$,
the following equation of Monge-Amp\`ere type on the sphere results:
\begin{equation}
m\left(\exp_{\xi}[\nabla\phi(\bm{\xi})]\right)|J(\bm{\xi})|=\frac{\int_{\mathbb{S}^{2}}m(\mathbf{x})\ \mathrm{d}x}{\int_{\mathbb{S}^{2}}\mathrm{d}x}=c.
\end{equation}
\end{proof}

\begin{corollary} The optimally transported mesh on the sphere satisfying
the equidistribution principle does not exhibit tangling. \end{corollary}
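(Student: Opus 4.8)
The plan is to show that the optimal transport map on the sphere is everywhere locally invertible, which rules out tangling. Mesh tangling, as noted in the Euclidean discussion, corresponds to a local loss of invertibility of the Jacobian of the map from the computational to the physical mesh. So the goal is to establish that $|J(\bm{\xi})| > 0$ everywhere, or more precisely that the map $T(\bm{\xi}) = \exp_{\bm{\xi}}[\nabla\phi(\bm{\xi})]$ is a well-defined bijection with positive Jacobian determinant.

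First I would invoke Theorem \ref{thm:mccannthm} directly: it asserts the existence of a \emph{unique} optimal transport map $T$ pushing $\mu$ forward onto $\nu$, with $T$ generated by a $d^2/2$-convex potential $\phi$ via the exponential map. The key structural fact is the $c$-convexity (cost-convexity) of $\phi$, which is the spherical analogue of ordinary convexity used in the Euclidean case. In the Euclidean setting, convexity of $\phi$ forces the Hessian $H(\phi)$ to be positive semi-definite, so $|I + H(\phi)| > 0$ and the map cannot fold. On the sphere I would argue that $c$-convexity plays exactly this role: it guarantees that the map is monotone in the appropriate geometric sense and that the transport does not send two distinct computational cells onto overlapping physical cells.

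Second, I would connect invertibility to the equidistribution relation just derived. From the corollary's equation $m(\exp_{\bm{\xi}}[\nabla\phi(\bm{\xi})])\,|J(\bm{\xi})| = c$ with $c > 0$, and since the monitor function $m$ is strictly positive (being a density up to scaling), we immediately obtain
\begin{equation}
|J(\bm{\xi})| = \frac{c}{m(\exp_{\bm{\xi}}[\nabla\phi(\bm{\xi})])} > 0
\end{equation}
at every point $\bm{\xi}$. A strictly positive Jacobian determinant everywhere means the map $T$ is an orientation-preserving local diffeomorphism, so the image cells never fold over or collapse; combined with the bijectivity coming from $T$ being a transport map (mass is conserved between $\mu$ and $\nu$), this gives a global homeomorphism and hence a tangle-free mesh. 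This is the cleanest route because it reduces the whole claim to the positivity of $m$ and of the constant $c$, both of which follow from the construction in the preceding proof.

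The main obstacle I anticipate is justifying that the Jacobian determinant is genuinely well-defined and positive rather than merely nonnegative almost everywhere. McCann's theorem guarantees the map exists and is unique, but the $d^2/2$-convex potential need not be smooth a priori, so $|J(\bm{\xi})|$ is in general only defined in a measure-theoretic (Monge–Amp\`ere measure) sense. Ruling out tangling rigorously therefore requires either assuming enough regularity on $\phi$ (e.g.\ appealing to regularity theory for optimal transport on the sphere, which holds because $\mathbb{S}^2$ with the squared-geodesic cost satisfies the Ma–Trudinger–Wang condition) or arguing directly that the $c$-convexity of $\phi$ precludes two distinct points mapping in a way that reverses orientation. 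I would flag this regularity point explicitly and lean on the MTW-type results for the sphere to upgrade almost-everywhere positivity to the genuine everywhere-positive statement needed to conclude no tangling.
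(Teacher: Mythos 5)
Your route is genuinely different from the paper's. The paper's proof never touches the Jacobian equation: it argues that taking the cost to be the \emph{squared geodesic distance} (rather than, say, squared Euclidean distance, which fails on the sphere) ensures the classical twist condition holds, so the optimal map $T$ in \eqref{eq:t1} is injective; injectivity --- every point $\mathbf{x}$ of the new mesh has a unique preimage $\bm{\xi}$ --- is then read directly as the absence of tangling. That argument requires no regularity of $\phi$ whatsoever. Your argument instead extracts $|J(\bm{\xi})| = c/m(\mathbf{x}) > 0$ from the equidistribution relation and concludes that $T$ is an orientation-preserving local diffeomorphism. This is a legitimate alternative, and it buys something the paper's proof does not: a quantitative, pointwise lower bound on the Jacobian. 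But it costs exactly what you flag yourself: the Jacobian equation holds a priori only almost everywhere and in a measure-theoretic sense, so you must invoke regularity theory (Ma--Trudinger--Wang / Loeper-type results on the sphere), which in turn forces hypotheses on the monitor function --- smoothness and boundedness away from zero --- that the paper's twist-condition argument never needs.

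Two steps in your write-up need repair. First, ``bijectivity coming from $T$ being a transport map'' is wrong as stated: a push-forward map need not be injective (a two-to-one map can push one measure onto another); injectivity comes from optimality via the twist condition, which is the paper's whole point --- or, once you have a local diffeomorphism, from topology, since a local diffeomorphism of the compact, simply connected $\mathbb{S}^2$ onto itself is a covering map and hence a global diffeomorphism. You should use one of those two arguments rather than appeal to mass conservation. Second, a minor slip: convexity (or $c$-convexity) gives only positive \emph{semi}-definiteness of the relevant Hessian, hence $|I+H(\phi)|\ge 0$; the strict inequality comes from the Monge--Amp\`ere equation itself, as you in fact use in your second step, so the first step should not claim it from convexity alone.
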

\begin{proof} The choice of cost function $c$ to be the squared
geodesic distance is crucial to the proof of uniqueness in Theorem
\ref{thm:mccannthm}. Indeed simply taking $c$ to be the square of
the Euclidean distance is not sufficient \citep{ahmad2004}. The squared
geodesic distance is necessary to ensure that the classical \emph{twist}
condition holds, i.e. $T$ given in \eqref{eq:t1} is injective and
hence is a map.

The injectivity of this map ensures that \eqref{eq:t2} is locally
invertible, i.e. for each point in the new mesh, $\mathbf{x}$, there
is a unique point in the original mesh, $\bm{\xi}$, which maps to
it - i.e. mesh tangling is not present. \end{proof}

\section{A Review of Numerical Methods for solving the Monge-Amp\`ere Equation\label{sec:review}}

The fully non-linear, second-order, elliptic Monge-Amp\`ere equation
is:
\begin{equation}
|H\left(\phi\left(\bm{\xi}\right)\right)|=f\left(\bm{\xi},\phi\right)\label{eq:MA1}
\end{equation}
for independent variable $\bm{\xi}\in\Omega$ and $\Omega\subset\mathbb{R}^{d}$
where $\phi$ is the (scalar) dependent variable, $f$ is a known
scalar function of $\bm{\xi}$ and $\phi$, $H=\nabla\nabla$ is the
Hessian (the tensorial gradient of the gradient) and $|H|$ is the
determinant of the Hessian. \citet{FO11} give an excellent review
of some numerical methods for solving this equation and this review
draws from and adds to their review.

There are two challenging parts to solving the Monge-Amp\`ere equation.
Firstly we need spatial discretisation methods both for the Hessian,
$H$, and for the source term, $f$ (although $f$ is a known function,
it can be a function of $\phi$ or of $\nabla\phi$, so numerical
approximations are necessary). The spatial discretisation leads to
a set of non-linear algebraic equations. Secondly, the algebraic equations
require a numerical algorithm to find solutions. \marginpar{Rev1.3}The
discretisation should ensure that the solutions is convex based on
a discrete definition of convexity. We will start by considering the
spatial discretisation of the Hessian, $H$. 

\citet{BHR09} used finite differences on a structured, Cartesian
grid to discretise the Hessian, a technique that was extended to three
dimensions by \citet{BBPC14}. \marginpar{Rev1.3}Convexity was ensured
by filtering the monitor function and smoothing the non-converged
solution. \citet{Obe08} describe a finite difference method that
uses a wide stencil to calculate the Hessian on a structured Cartesian
grid. This was extended to three dimensions by \citet{FO11}. The
wide stencil was needed to ensure monotonicity of the iterative solution
to the convex, numerical solution. \citet{FOS1x} study the slightly
different, 2-Hessian equation and describe how they rotate the coordinate
system so that the Hessian is diagonal and hence the solution is convex
and the discretisation is monotone. \citet{FN09} approximate the
Monge-Amp\`ere equation by a fourth-order quasi-linear equation in
order to use mixed finite elements for the spatial discretisation.
\citet{DG06,DG06b} also use mixed finite-elements on triangulations
of the unit plane. All of these techniques have been used on either
2D or 3D Euclidean geometry.

\marginpar{Rev1.4}When solving the Monge-Amp\`ere equation for mesh
adaptation, the RHS of eqn (\ref{eq:MA1}) depends on $\nabla\phi$.
\citet{FroeseThesis2012} pointed out that standard centred differences
are not monotone for discretising this term and so used wide stencil
finite differences. \citet{SAK1x} experimented with second and fourth
order centred finite differences and a spectral method for discretisting
$\nabla\phi$.

Once the Monge-Amp\`ere equation is discretised in space, it is necessary
to solve the resulting non-linear algebraic equations, the part of
the method that we describe as the ``algorithm''. \citet{BW06,BHR09}
introduced a parabolic version of the Monge-Amp\`ere equation which
is solved by time-stepping, including an implicit relaxation term
to smooth the transient solution and to speed up convergence:
\begin{equation}
\left(I-\gamma\nabla^{2}\right)\frac{\partial\phi}{\partial t}=\left(m\left(\nabla\phi\right)|I+H\left(\phi\right)|\right)^{\frac{1}{d}}\label{eq:PMA}
\end{equation}
where $\gamma$ is a scalar parameter defining the amount of smoothing
applied, $\nabla^{2}$ is the Laplacian operator and $d$ is the number
of spatial dimensions. The time-stepping effectively creates fixed-point
iterations but it may be possible to create more convergent iterations,
without smoothing towards a uniform mesh. \citet{BFO10} also used
fixed-point iterations by linearising the two-dimensional Hessian
term with a Laplacian:
\begin{equation}
|H\left(\phi\right)|=\frac{1}{2}\left(\nabla^{2}\phi\right)^{2}-\frac{\phi_{xx}^{2}+\phi_{yy}^{2}}{2}-\phi_{xy}^{2}.
\end{equation}
After some manipulation, this results in a Poisson equation which
can be solved implicitly with the non-linear terms on the right hand
side. \citet{FO11} describe this as a semi-implicit method and use
it to find the starting point for a Newton method. A Newton method
is a common algorithm for solving the algebraic equations \citep{DG06,FO11,CSC14}.
However the cost and complexity of the Newton method may not be necessary
for mesh generation. In this paper we focus on fixed-point iterations,
although the new linearisation proposed may also be beneficial for
calculating the first guess for a Newton method. 

Equations of Monge-Amp\`ere type have not before been solved numerically
on the sphere. The description of the optimally transported mesh problem
using the Monge-Amp\`ere equation relies on properties of Euclidean
geometry \citep{BHR09}. The numerical solution technique for the
optimal transport problem on the surface of a sphere will be described
in section \ref{sec:method}.

\section{Numerical Method for Calculating OT Meshes\label{sec:method}}

There are two aspects to solving equations of Monge-Amp\`ere type
in order to calculate optimally transported (OT) meshes. The spatial
discretisation describes how to calculate the gradient and the Hessian
of the mesh potential, $\phi$, from discrete values (in this instance
in finite volume cells). This will convert the PDE into a set of non-linear
algebraic equations. The algorithm describes how to linearise and
solve the large set of non-linear algebraic equations.

\subsection{The Numerical Algorithm}

\subsubsection{In Euclidian Space\label{sub:linearPlane}}

A fixed-point iteration sequence to solve eqn. (\ref{eq:MA}) can
be found by observing that the linear terms of $|I+H\left(\phi\right)|$
are in fact $1+\nabla^{2}\phi$ where $\nabla^{2}$ is the Laplacian
operator \marginpar{Rev3.4}(linearising about $\phi=0$). Eqn. (\ref{eq:MA})
can then be written as fixed-point iterations:
\begin{equation}
1+\nabla^{2}\phi^{n+1}=1+\nabla^{2}\phi^{n}-|I+H\left(\phi^{n}\right)|+\frac{c^{n}}{m({\bf x}^{n})}\label{eq:MA_SI}
\end{equation}
where $n$ is the iteration number and where:
\begin{equation}
{\bf x}^{n}=\bm{\xi}+\nabla\phi^{n}.
\end{equation}
This is simpler than the fixed-point iterations used by \citet{FN09,BFO10}
because of the presence of the identity tensor in our Monge-Amp\`ere
equation which simplifies the linearisation. These fixed-point iterations
are similar to the solution of the parabolic Monge-Amp\`ere equation
by \citet{BBPC14} but could have advantages because the Laplacian
term should initially accelerate convergence whereas the Laplacian
smoothing used by \citet{BBPC14} was only used to smooth intermediate
iterations. 

Given suitable spatial discretisations, eqn. (\ref{eq:MA_SI}) can
be solved for $\phi^{n+1}$ given known values $\phi^{n}$. Assuming
periodic boundary conditions, for the Poisson equation (\ref{eq:MA_SI})
to have a solution, $c^{n}$ must take the value
\begin{equation}
c^{n}=\frac{\sum|I+H\left(\phi^{n}\right)|V_{\xi}}{\sum\frac{V_{\xi}}{m\left(\mathbf{x}^{n}\right)}}
\end{equation}
where $V_{\xi}$ are the volumes of the original, computational mesh
cells and the summations are over all cells of the computational mesh.

Without a monotone spatial discretisation, numerical solutions of
the Monge-Amp\`ere equation can become non-convex leading to artificial
oscillations in the numerical solution and non-convergence \citep{FOS1x}.
The spatial discretisation described here is not monotone and the
numerical solution can become non-convex. Therefore, in order to improve
stability of the fixed-point iteration sequence, the Laplacian terms
of eqn. (\ref{eq:MA_SI}) can be multiplied by a factor, $1+\alpha$,
where $\alpha\ge0$:
\begin{equation}
\left(1+\alpha\right)\nabla^{2}\phi^{n+1}=\left(1+\alpha\right)\nabla^{2}\phi^{n}-|I+H\left(\phi^{n}\right)|+\frac{c^{n}}{m({\bf x}^{n})}\label{eq:PMA_SI}
\end{equation}
which clearly has no affect on a converged solution but will alter
the convergence of the fixed-point iterations used to find $\phi$
and can help to keep the numerical solution smooth. This is a form
of under-relaxation and the value of $\alpha$ will be defined in
section \ref{sub:underRelax}. A solution of the Monge-Amp\`ere equation
only controls $|I+H\left(\phi\right)|$, not the individual eigenvalues
of $I+H\left(\phi\right)$. If one of the eigenvalues gets large and
the other small, the Laplacian preconditioning will\marginpar{Rev2.10a}
not lead to convergent iterations without the under-relaxation.

\subsubsection{On the Surface of the Sphere}

In order to solve the optimal transport problem on the sphere, we
solve eqn (\ref{eq:MAsph}) directly rather than eqn (\ref{eq:MA}).
However, in order to define fixed-point iterations, we need to find
a linearisation of eqn. (\ref{eq:MAsph}). \marginpar{Rev1.1}For
small maps, we assume that maps lie on a tangent to the sphere and
so eqn. (\ref{eq:MAsph}) can be approximated by eqn. (\ref{eq:MA}).
We then use the same linearisation as in section \ref{sub:linearPlane}
and the same fixed-point iteration sequence:
\begin{equation}
\left(1+\alpha\right)\nabla\phi^{n+1}=\left(1+\alpha\right)\nabla^{2}\phi^{n}-r\left(\phi^{n}\right)+\frac{c^{n}}{m({\bf x}^{n})}\label{eq:PMA_SIsph}
\end{equation}
where
\begin{equation}
\mathbf{x}^{n}=\exp_{\bm{\xi}}[\nabla\phi^{n}(\bm{\xi})].
\end{equation}
No further approximation is needed for $r=V_{\xi}^{n}/V_{x}^{n}$
since the old and new cell volumes can be computed explicitly at every
iteration. The linearisation will now be less accurate than in the
Euclidean case due to the curvature of the sphere, so it may be necessary
to increase $\alpha$ further to avoid divergence. 

\marginpar{Rev3.4 Ed2} \sout{The fixed-point iteration sequences
defined in eqns. (\mbox{\ref{eq:PMA_SI}}) and (\mbox{\ref{eq:PMA_SIsph}})
are preferred to the solution of the parabolic Monge-Amp\`ere equation
as used by \mbox{\citet{BCW13,BBPC14}} since the Laplacian term used
above is a linearisation of the Hessian and so will accelerate convergence
to the given monitor function whereas the Laplacian used by \mbox{\citet{BCW13,BBPC14}}
smoothes the solution towards a uniform mesh. However there may be
further scope for improvement by combining the best aspects of the
two approaches.}

\subsection{Spatial Discretisation on the Computational Mesh\protect\marginpar{Ed5}}

\marginpar{Ed2} \sout{For the solution of the Monge-Amp\`ere
equation in Euclidian geometry to be convex and converge monotonically,
\mbox{\citet{FO11}} use a Newton solver and a wide-stencil finite-difference
scheme. In Euclidean geometry we are solving a different version of
the Monge-Amp\`ere eqn. (\mbox{\ref{eq:PMA_SI}}) and on the sphere
(eqn \mbox{\ref{eq:PMA_SIsph}}), no previous solution technique exists.
We use fixed-point iterations and} We have found by experience, trial
and error and by analogy with numerical solution techniques for the
rotating shallow-water equations \citep[eg ][]{TRSK09}, some desirable
properties of the spatial discretisation in order to achieve convergence.
Further work to improve the spatial discretisation and prove convergence
is needed. 
\begin{enumerate}
\item The discretisation of $|I+H\left(\phi^{n}\right)|$ should be consistent
with the discretisation of $1+\nabla^{2}\phi$ otherwise the linearisation
will not be close and the iterative solution will not converge quickly.
In this context, consistent means that the trace of the discretised
$H\left(\phi\right)$ must be equal to $\nabla^{2}\phi$, as occurs
analytically. This is only possible when solving eqn (\ref{eq:PMA_SI}),
not eqn. (\ref{eq:PMA_SIsph}) since the relationship between $r$
and $1+\nabla^{2}\phi$ is not known numerically.
\item The spatial discretisation should be at least second-order accurate
and the errors should be smooth. If we have rough truncation errors
or first-order accurate truncation errors then truncation errors could
lead to mesh tangling.\marginpar{Ed5}
\item To avoid grid-scale oscillations in the solution of $\phi$, the spatial
discretisation should be as compact as possible so that grid-scale
oscillations of $\phi$ are not hidden in the discretisations of $|I+H\left(\phi^{n}\right)|$
and $m\left(\mathbf{x}\right)$.
\item If the solution, $\phi$, is convex or locally convex, then convex
cells in the initial mesh should remain convex in the mapped mesh.
This implies that $\nabla\phi$ should have bounded variation. %
\footnote{We postulate that, following the TRiSK discretisation on polygons
\citep{TRSK09}, the divergence of the mesh map on the initial (primal)
mesh should be a convex combination of the divergence of the mesh
map if it were calculated on a dual mesh (eg a triangulation). %
}
\end{enumerate}
We are considering a finite-volume discretisation on initially orthogonal
nearly uniform polygonal prisms. \marginpar{Rev2.3}The discretisation
that we describe is defined for arbitrary two-dimensional orthogonal
meshes consisting of shapes with any number of sides. This and the
above requirements suggests the following spatial discretisation on
a fixed\marginpar{Rev2.4} computational mesh:

\subsubsection{Discretisation of the Laplacian\label{sub:discLaplace}}

For cell $i$ with faces $f\in i$, the simplest, most compact discretisation
of the Laplacian, suitable for an orthogonal grid, using Gauss's divergence
theorem, is:\marginpar{Rev3.6}
\begin{equation}
\nabla_{i}^{2}\phi\approx\frac{1}{V_{i}}\sum_{f\in i}\nabla_{nf}\phi|\mathbf{S}_{f}|
\end{equation}
where cell $i$ has volume $V_{i}$, $\mathbf{S}_{f}$ is the outward
pointing normal vector to cell $i$ at face $f$ with area equal to
the face area so that $|\mathbf{S}_{f}|$ is the face area and gradient
normal to\marginpar{Ed5} each face is:
\begin{equation}
\nabla_{nf}\phi=\frac{\phi_{i_{f}}-\phi_{i}}{|\mathbf{d}_{f}|}\label{eq:snGrad}
\end{equation}
where cell $i_{f}$ is the cell on the other side of face $f$ from
cell $i$ and $|\mathbf{d}_{f}|$ is the (geodesic) distance between
cell centre $i$ and $i_{f}$ in the computational domain\marginpar{Ed5}.
This simple form ensures curl free pressure gradients (assuming that
the curl is calculated using Stokes circulation theorem around every
edge of the 3D mesh). If cell $i$ has centre $\bm{\xi}_{i}$ then
$\mathbf{d}_{f}=\bm{\xi}_{i}-\bm{\xi}_{i_{f}}$ in Euclidean geometry.
On the surface of the sphere, $|\mathbf{d}_{f}|$ is the great circle
distance between $\bm{\xi}_{i}$ and $\bm{\xi}_{if}$. Locations $\bm{\xi}_{i}$
and $\bm{\xi}_{i_{f}}$, vector $\mathbf{S}_{f}$ and $\mathbf{d}_{f}$
for cells $i$ and $i_{f}$ are shown in fig. \ref{fig:cell}(a). 

\begin{figure}
\begin{centering}
\includegraphics{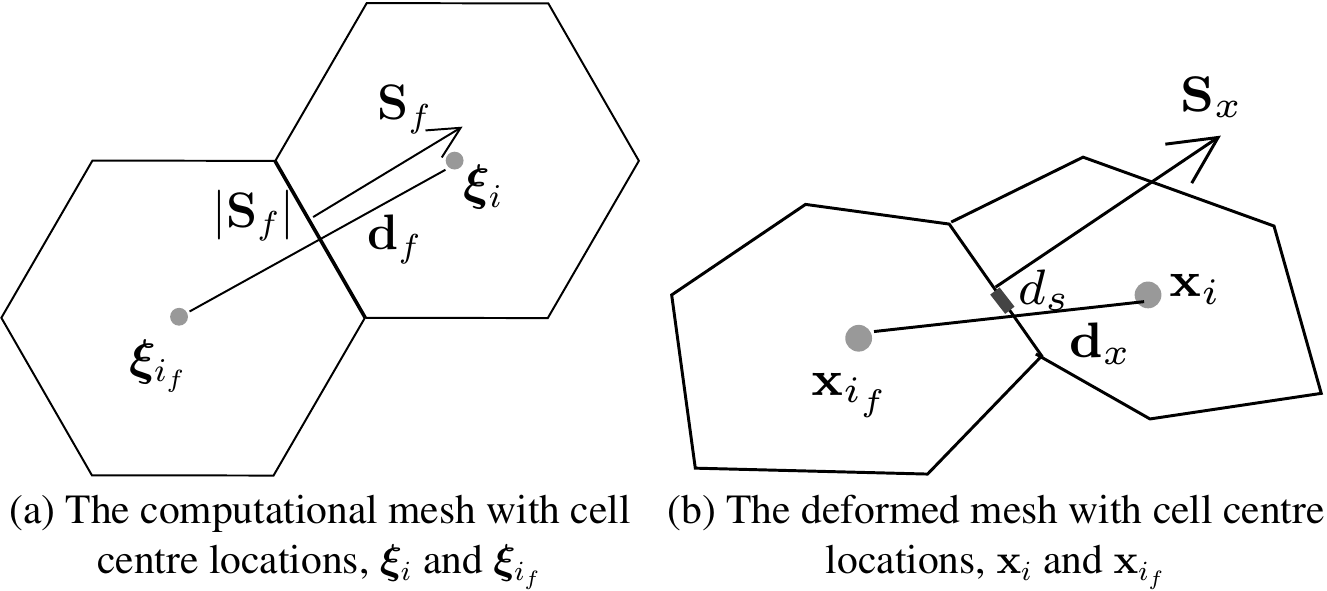}
\par\end{centering}

\caption{Cells $i$ and $i_{f}$ of the computational and deformed meshes either
side of face $f$, face area vector $\mathbf{S}_{f}$ and vector between
cell centres, $\mathbf{d}_{f}$. The skewness of face $f$ of the
deformed mesh (b) is shown by a short grey line of length $d_{s}$
in the plane of the face (see section \ref{sec:diags}). \label{fig:cell}}
\end{figure}

\subsubsection{Discretisation of the Hessian}

Two approaches are taken to calculate the Hessian. The first we define
a finite-difference approach (which uses both finite volume and finite
difference approximations). The second uses the fact that, in solving
the Monge-Amp\`ere equation for mesh generation, we are approximating
the change in cell volume by the determinant of the Hessian. Therefore,
rather than calculating a discretised Hessian, we can simply use the
change in cell volume, $r$. This is the geometric approach. The geometric
approach is always used on the surface of the sphere.\marginpar{Rev1.1}

\paragraph{Finite Difference Discretisation of the Hessian}

For a discretisation of the Hessian consistent with the discretisation
of the Laplacian, we use Gauss's theorem:\marginpar{Rev3.6}
\begin{equation}
H\left(\phi\right)_{i}=\nabla\nabla_{i}\phi=\frac{1}{V_{i}}\sum_{f\in i}\nabla_{f}\phi\mathbf{S}_{f}\label{eq:FDhess}
\end{equation}
where $\nabla_{f}\phi$ is the vector gradient of $\phi$ located
at face $f$ of cell $i$. The vector gradient, $\nabla_{f}\phi$,
is reconstructed from normal components, $\nabla_{nf}\phi$ using
a least-squares fit which is derived by assuming that $\nabla\phi$
is uniform so that it is first-order accurate on non-uniform meshes.
This approach starts by reconstructing a cell-centred gradient from
surrounding normal gradients using a least squares fit:
\begin{equation}
\nabla_{i}\phi=\left(\sum_{f\in i}\hat{\mathbf{S}}_{f}\mathbf{S}_{f}^{T}\right)^{-1}\sum_{f\in i}\nabla_{nf}\phi\ \mathbf{S}_{f}.\label{eq:gradc}
\end{equation}
\marginpar{Ed5}where $\hat{\mathbf{S}}_{f}=\mathbf{S}_{f}/|\mathbf{S}_{f}|$.
Next, a temporary value of the vector valued gradient at each face
is calculated:
\begin{equation}
\nabla_{f}^{\prime}\phi=\lambda_{f}\nabla_{i}\phi+\left(1-\lambda_{f}\right)\nabla_{i_{f}}\phi
\end{equation}
where $\lambda_{f}$ is the coefficient for linear interpolation.
For consistency with the Laplacian, we must have $\nabla_{f}\phi\cdot\mathbf{S_{f}}=\nabla_{nf}\phi|\mathbf{S}_{f}|$
which can be enforced with an explicit correction:
\begin{equation}
\nabla_{f}\phi=\nabla_{f}^{\prime}\phi+\left(\nabla_{nf}\phi-\nabla_{f}^{\prime}\phi\cdot\hat{\mathbf{S}}_{f}\right)\hat{\mathbf{S}}_{f}.\label{eq:gradf}
\end{equation}
The Hessian calculated using eqn. (\ref{eq:FDhess}) is not symmetric,
as the analytic version would be.

\paragraph{Geometric approach to calculating the Hessian}

A numerical approximation for calculating $H$ will introduce truncation
errors so instead we can simply use the change in cell volume:
\begin{equation}
r_{i}=|I+H_{i}\left(\phi\right)|=\frac{V_{i}\left(\mathbf{x}\right)}{V_{i}\left(\bm{\xi}\right)}
\end{equation}
where $V_{i}\left(\mathbf{x}\right)$ is the volume of the transported
mesh cell $i$ and $V_{i}\left(\bm{\xi}\right)$ is the volume of
the original cell. Volumes are calculated on the surface of the sphere
by decomposing every polyhedron (on the original and new meshes) into
tetrahedra with curved surfaces which are flat in spherical geometry.
The volumes of these tetrahedra are found using the formula for the
area of a spherical triangle.

\subsubsection{The Gradient at the Vertices}

In order to calculate the mesh map and consequently to calculate $m$,
we must calculate $\nabla\phi$ at the mesh vertices, $\nabla_{v}\phi$.
(This is in contrast to \marginpar{Rev1.4}\citealp{FroeseThesis2012,SAK1x}
who discretise the gradient of $\phi$ at the same locations where
$\phi$ is stored.) Ideally, the calculation of $\nabla_{v}\phi$
should not produce any non-convex cells and it turns out to be particularly
sensitive to the numerical approximation and its stencil. In section
\ref{par:wideGradv} we will describe a large stencil gradient, for
which grid-scale oscillations in $\phi$ can grow which are not seen
in $\nabla_{v}\phi$ and convergence is slow. \marginpar{Rev3.7}In
section \ref{par:smallGradv} we will describe a small stencil gradient
which can lead to grid-scale oscillations of $\nabla_{v}\phi$ on
a hexagonal mesh since the calculation of the gradient does not lead
to a gradient with bounded variation. This leads to locally distorted
meshes. \marginpar{Rev2.6}Section \ref{par:goldilocksGradv} describes
a compromise; a new Goldilocks stencil that combines the advantages
of both large and small.

\paragraph{Vertex Gradient using a Large Stencil\label{par:wideGradv}}

In order to calculate $\nabla_{v}\phi$ at the vertices, $\nabla_{f}\phi$
at the faces is calculated using eqn. (\ref{eq:gradf}). These values
are then interpolated onto the vertices using linear interpolation.
On a mesh of squares, four values of $\nabla_{f}\phi$ are averaged
to calculate each $\nabla_{v}\phi$ at a vertex and on a mesh of hexagons,
three values of $\nabla_{f}\phi$ are averaged to calculate each $\nabla_{v}\phi$.
Including the calculation of $\nabla_{f}\phi$, the reconstruction
of $\nabla_{v}\phi$ from $\phi$ uses a stencil of 10 hexagons on
a hexagonal mesh and 12 squares on a mesh of squares. Due to these
large stencils, the gradients calculated are smooth even if the $\phi$
field is not smooth. Due to the averaging (interpolation) of the gradient
from the cell centres to the vertices there will be some consistency
between gradients at different vertices and so cells may remain convex.

\paragraph{Vertex Gradient using a Small Stencil\label{par:smallGradv}}

The vector gradient at each vertex, $\nabla_{v}\phi$, can be reconstructed
directly from the normal component of the gradient at the surrounding
faces using a least squares fit
\begin{equation}
\nabla_{v}\phi=\left(\sum_{f\in v}\mathbf{d}_{f}\mathbf{d}_{f}^{T}\right)^{-1}\sum_{f\in v}\left(\mathbf{d}_{f}\nabla_{nf}\phi\right)\label{eq:compactGradv}
\end{equation}
where $f\in v$ is the set of faces which share vertex $v$. This
approximation is exact for a uniform vector field, $\nabla\phi$,
and is consequently first order accurate on an arbitrary mesh. However
on a hexagonal mesh, eqn. (\ref{eq:compactGradv}) only uses information
from three surrounding faces and three surrounding hexagons and the
resulting gradients are prone to grid-scale oscillations which can
lead to the creation of non-convex cells. The small amount of information
used at every vertex means that neighbouring vertices can have very
different gradients. We therefore need a larger stencil, but not as
large as the stencil used in section \ref{par:wideGradv}. 

On a mesh of squares, $\phi$ at four squares is sufficient to reconstruct
a smooth $\nabla_{v}\phi$ to second order.

\paragraph{Vertex Gradient using the Goldilocks Stencil\label{par:goldilocksGradv}}

The Goldilocks stencil should be large enough to calculate a smooth
gradient (with bounded variation) but without including averaging
which can hide grid-scale oscillations in $\phi$. The stencil used
includes the faces which share vertex $v$ and the face neighbours
attached by a vertex to those faces (fig. \ref{fig:goldilocksStencil}).
The vertex gradient is then reconstructed using a least squares fit:
\begin{equation}
\nabla_{v}\phi=\left(\sum_{f\in v^{\prime}\in f^{\prime}\in v}\mathbf{d}_{f}\mathbf{d}_{f}^{T}\right)^{-1}\sum_{f\in v^{\prime}\in f^{\prime}\in v}\left(\mathbf{d}_{f}\nabla_{nf}\phi\right)\label{eq:golilocksGradv}
\end{equation}
\marginpar{Rev2.10b}where $f\in v^{\prime}\in f^{\prime}\in v$ is
the set of faces shown\marginpar{Rev3.8} \sout{by dashed lines}
in fig. \ref{fig:goldilocksStencil}. In the least squares fit in
eqn. (\ref{eq:golilocksGradv}), the central faces are counted three
times (making the fit more accurate near the centre, following \citet{WWF09}). 

\begin{figure}
\begin{centering}
\includegraphics{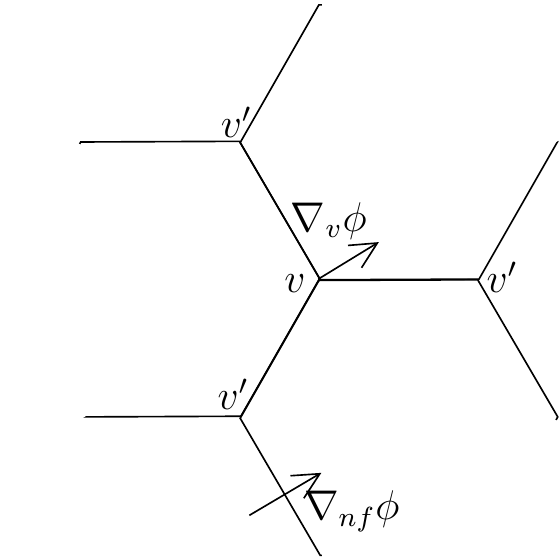}
\par\end{centering}

\caption{%
The Goldilocks stencil for recontructing the full gradient of the
mesh potential, $\nabla_{v}\phi$, at the central vertex, $v$, from
the component of the normal gradient at each face, $\nabla_{nf}\phi$.
This results in a stencil of six hexagons to calculate $\nabla_{v}\phi$
at a vertex.\label{fig:goldilocksStencil}%
}
\end{figure}

\subsubsection{Calculation of Exponential Maps\protect\marginpar{Rev1.6}}

Exponential maps are used to move vertices on the surface of the sphere.
The direction of the map is given by the direction of $\nabla_{v}\phi$
at vertex $v$ (ie the direction is along the great circle in the
plane of $\nabla_{v}\phi$) The distance moved is the geodesic distance
$|\nabla_{v}\phi|$ so that the vertex is rotated around the sphere
by an angle $|\nabla_{v}\phi|/a$ where $a$ is the radius of the
sphere.

\subsubsection{Linear equation solver and fixed point iterations}

\marginpar{Rev3.9}Spatial discretisation of eqns (\ref{eq:PMA_SI}),(\ref{eq:PMA_SIsph})
leads to a set of linear algebraic equations, which can be written
as a matrix equation, $A\bm{\phi}^{(n+1)}=\mathbf{b}^{(n)}$, where
$\bm{\phi}^{(n+1)}$ is the vector of all of the values of the unknown,
$\phi^{(n+1)}$. This matrix equation is solved using the OpenFOAM
GAMG solver (geometric algebraic multi-grid, \citep{OpenFOAM}) using
diagonal incomplete Cholesky smoothing with 50 cells in the coarsest
level. The residual for the solver tolerance is defined as:
\begin{equation}
\frac{\sum|\mathbf{b}-A\bm{\phi}|}{\sum\left(|\mathbf{b}|+|A\mathbf{\bm{\phi}}|\right)}\label{eq:initResid}
\end{equation}
where the sum is over all cells of the mesh (ie over all elements
of the vectors $\mathbf{b}$ and $A\bm{\phi}$). \marginpar{Rev3.11}For
each fixed-point iteration, the values of $\phi$ from the previous
iteration are used as an initial guess for the solution of the matrix
equation, so the initial residual should converge to the final residual
as the fixed-point iterations converge. At each fixed point iteration
(ie each value of $n$ in eqn. (\ref{eq:PMA_SI})) the matrix equation
is solved with a tolerance equal to the maximum of 0.001 times the
initial residual and $10^{-8}$. The matrix equation is not solved
all the way to $10^{-8}$ at every fixed-point iteration to save computational
cost but, when the fixed-point iterations have converged, the initial
residual will be less than $10^{-8}$. A weaker tolerance is probably
acceptable for mesh generation but we are using a tight tolerance
to have more confidence that the numerical method is convergent.

\subsubsection{Moving Voronoi Generating Points}

If the initial mesh is Voronoi and it is required that the transported
mesh is also Voronoi, then the Voronoi generating points can be moved
using eqn. (\ref{eq:meshMap}) using the cell centre gradient, reconstructed
from the face gradient using volume weighting. Then the moved generating
points can be re-tesselated to create a new Voronoi tesselation. However,
the re-tesselation may not have exactly the same connectivity due
to edge swapping in the Delaunay algorithm. This technique therefore
may not be so suitable for r-adaptivity.

\subsubsection{Calculating the Monitor Function}

When using r-adaptivity, the mesh monitor function (that controls
the mesh density) will need to be mapped from the previous mesh onto
the new transported mesh so that it can be evaluated when solving
eqns. (\ref{eq:PMA_SI}) or (\ref{eq:PMA_SIsph}). In section \ref{sec:results},
we first present results using an analytic monitor function, \marginpar{Rev3.10}which
is evaluated at the transported mesh cell centres. We then \marginpar{Rev3.10}use
observed meteorological data to calculate a monitor function by mapping
the data to the computational grid and then \marginpar{Rev3.10}apply
Laplacian smoothing, as described in section \ref{sub:smoothingm}.

\subsection{Enforcing Stability }

\subsubsection{Under-relaxation\label{sub:underRelax}}

Here we describe how $\alpha$ is calculated. We start by defining
the source terms of the eqns (\ref{eq:PMA_SI}),(\ref{eq:PMA_SIsph})
to be $s^{n}=|I+H\left(\phi^{n}\right)|-c/m({\bf x}^{n})$ and $s^{n}=r\left(\phi^{n}\right)-c/m({\bf x}^{n})$
respectively. For convergence to occur, we would like the source term
to decrease relative to the Laplacian term, $\nabla^{2}\phi$. Initially,
the source term has order 1. In the tests undertaken, both on the
plane and on a sphere, it has been sufficient to keep the ratio of
the Laplacian to the source term greater than four and to always ensure
that $\alpha$ increases with iteration number, $n$. So $\alpha$
is set to be:
\begin{equation}
1+\alpha^{n+1}=\max\left(1+\alpha^{n},\ 4\max\left(^{1}\!\!/\!_{4},\ \max\left(|s^{n}|\right)\right)\right)
\end{equation}

\subsubsection{Smoothing the Monitor Function\label{sub:smoothingm}}

Following \citet{BBPC14}, we experimented with smoothing the monitor
function and this smoothing certainly improved convergence and generated
meshes with smoother grading and hence lower anisotropy and skewness
and better orthogonality (see section \ref{sec:diags}). However the
purpose of this work is to describe a robust solution of the Monge-Amp\`ere
equation on the sphere for any monitor function. So smoothing of the
monitor function will not be considered for the analytically defined
monitor functions. However, when using meteorological data to define
a monitor function, the monitor function is smoothed on the computational
grid during each iteration using Laplacian smoothing:

\begin{eqnarray*}
m & = & m^{\prime}\left(\exp_{\xi}\nabla\phi\right)+\frac{1}{4}\nabla\cdot\left(|\mathbf{d}_{f}|^{2}\nabla m^{\prime}\left(\exp_{\xi}\nabla\phi\right)\right)
\end{eqnarray*}
where $m^{\prime}$ is the monitor function mapped from the meteorological
data onto the physical grid at position $\exp_{\xi}\nabla\phi$ and
$m$ is the monitor function used in the source terms of eqns. (\ref{eq:PMA_SI}),(\ref{eq:PMA_SIsph}).
The diffusion coefficient used is the square of the mesh spacing,
$|\mathbf{d}_{f}|$ on the computational grid.

\section{Diagnostics of Convergence and of Mesh Quality\label{sec:diags}}

\marginpar{Rev3.9}Convergence is measured in two ways. Firstly, convergence
is measured by plotting the initial residual (eqn \ref{eq:initResid})
of the matrix equation at every fixed point iteration as a function
of iteration number. This gives an indication of how much the solution
is changing for each fixed-point iteration.

Secondly, the convergence of the final solution is assessed by plotting
the change in cell area between the initial mesh and the final iteration
for every cell in comparison to $c/m$. At convergence, these should
be equal. The test cases considered use an axi-symmetric monitor function,
$m$, so this measure is plotted as a scatter plot against distance
to the axis of symmetry. This tells us where the solution is not converging
to the required monitor function and also, for solutions using $|I+H\left(\phi^{n}\right)|$
instead of $r\left(\phi^{n}\right)$ (ie using the finite difference
Hessian rather than the geometric Hessian) in the Monge-Amp\`ere
equation, it tells us how well $|I+H\left(\phi^{n}\right)|$ approximates
$r\left(\phi^{n}\right)$.

The diagnostics of mesh quality consider cell centres, defined as
cell centroids or centres of mass of the moved cells, and face centres,
defined in the same way. \marginpar{Rev3.12}These will also use the
face area vector, $\mathbf{S}_{x}$, the normal vector to each face
with magnitude equal to the face area and $\mathbf{d}_{x}$, the vector
between cell centres either side of a face of the deformed mesh (see
fig \ref{fig:cell}(b)).

In order to measure mesh quality, firstly we will consider mesh spacing,
$|\mathbf{d}_{x}|$, between adjacent cell centres for each cell face
as a scatter diagram as a function of distance to the axis of symmetry
(for the axi-symmetric cases). This informs us about the aspect ratios
of the cells since cells with high aspect ratio will give a large
scatter of values of $|\mathbf{d}_{x}|$ for a given distance to the
axis of symmetry. If the mesh is perfectly equidistributed then cell
areas should be given by $c/m$. Therefore, for the meshes of quadrilaterals,
$|\mathbf{d}_{x}|$ will be compared with $\sqrt{c/m}$ and for the
meshes of mostly hexagons, $|\mathbf{d}_{x}|$ will be compared with
$\sqrt{2c\ \tan(\pi/3)/3m}$ .

The second mesh quality diagnostic is non-orthogonality for each cell
face which is measured as
\begin{equation}
\text{non-orthogonality}=\cos^{-1}\frac{\mathbf{S}_{x}\cdot\mathbf{d}_{x}}{|\mathbf{S}_{x}||\mathbf{d}_{x}|}.\label{eq:nonOrthog}
\end{equation}

The third mesh quality diagnostic is the face skewness, measured as
the distance, $d_{s}$, between the face centre and the crossing point
between the vector $\mathbf{d}_{x}$ with the face, normalised by
$|\mathbf{d}_{x}|$:
\begin{equation}
\text{skewness}=\frac{d_{s}}{|\mathbf{d}_{x}|}.\label{eq:skewness}
\end{equation}
The skewness distance, $d_{s}$, is shown as a short grey line in
fig. \ref{fig:cell}(b). This definition of skewness is a feature
of the non-linearities of the map generating the mesh and is different
quantitatively and qualitatively from that of \citet{BRW15} which
can be calculated directly the Jacobian of the map. The skewness metric,
$Q$, from \citet{BRW15} gives information about isotropy and orthogonality,
not face skewness.

\section{Results\label{sec:results}}

Optimally transported meshes are generated in two-dimensional planar
geometry to compare with those generated by numerical solution of
the parabolic Monge-Amp\`ere equation by \citet{BRW15}. Next, OT
meshes are generated on the surface of the sphere in order to compare
with the centroidal Voronoi meshes generated by \citet{RJG+11} using
Lloyd's algorithm. Finally, OT meshes are generated on the sphere
using observed precipitation to define a monitor function.

\subsection{Optimally Transported Meshes in Euclidean Geometry}

Meshes are generated on a finite plane, $[-1,1]^{2}$, using the radially
symmetric monitor function used by \citet{BRW15} defined for each
location $\mathbf{x}_{i}$:
\begin{equation}
m\left(\mathbf{x}_{i}\right)=1+\alpha_{1}\text{sech}^{2}\left(\alpha_{2}\left(R^{2}-a^{2}\right)\right)\label{eq:planeMonitor}
\end{equation}
where $\mathbf{x}_{c}$ is the centre of the refined region (the origin
for these results), $R$ is the distance of $\mathbf{x}_{i}$ to $\mathbf{x}_{c}$
and $\alpha_{1}$, $\alpha_{2}$ and $a$ control the variations of
the density function. Following \citet{BRW15} we generate two types
of mesh with this monitor function, the first we call the ring mesh
using $a=0.25$, $\alpha_{1}=10$ and $\alpha_{2}=200$ and the second
the bell mesh using $a=0$, $\alpha_{1}=50$ and $\alpha_{2}=100$,
both using periodic boundary conditions for $\phi$. \marginpar{Rev2.3}
The computational meshes on which the optimal transport problems are
solved are uniform grids of $60\times60$ squares.\marginpar{Rev2.7}

The ring and bell meshes generated using both the finite difference
and the geometric Hessian on the plane are shown in figure \ref{fig:planeMeshes}.
The convergence diagnostics will be presented in section \ref{sub:convPlane}.
Mesh quality for these meshes was analysed by \citet{BRW15} and this
is not repeated here.

\begin{figure}
\includegraphics[width=1\linewidth]{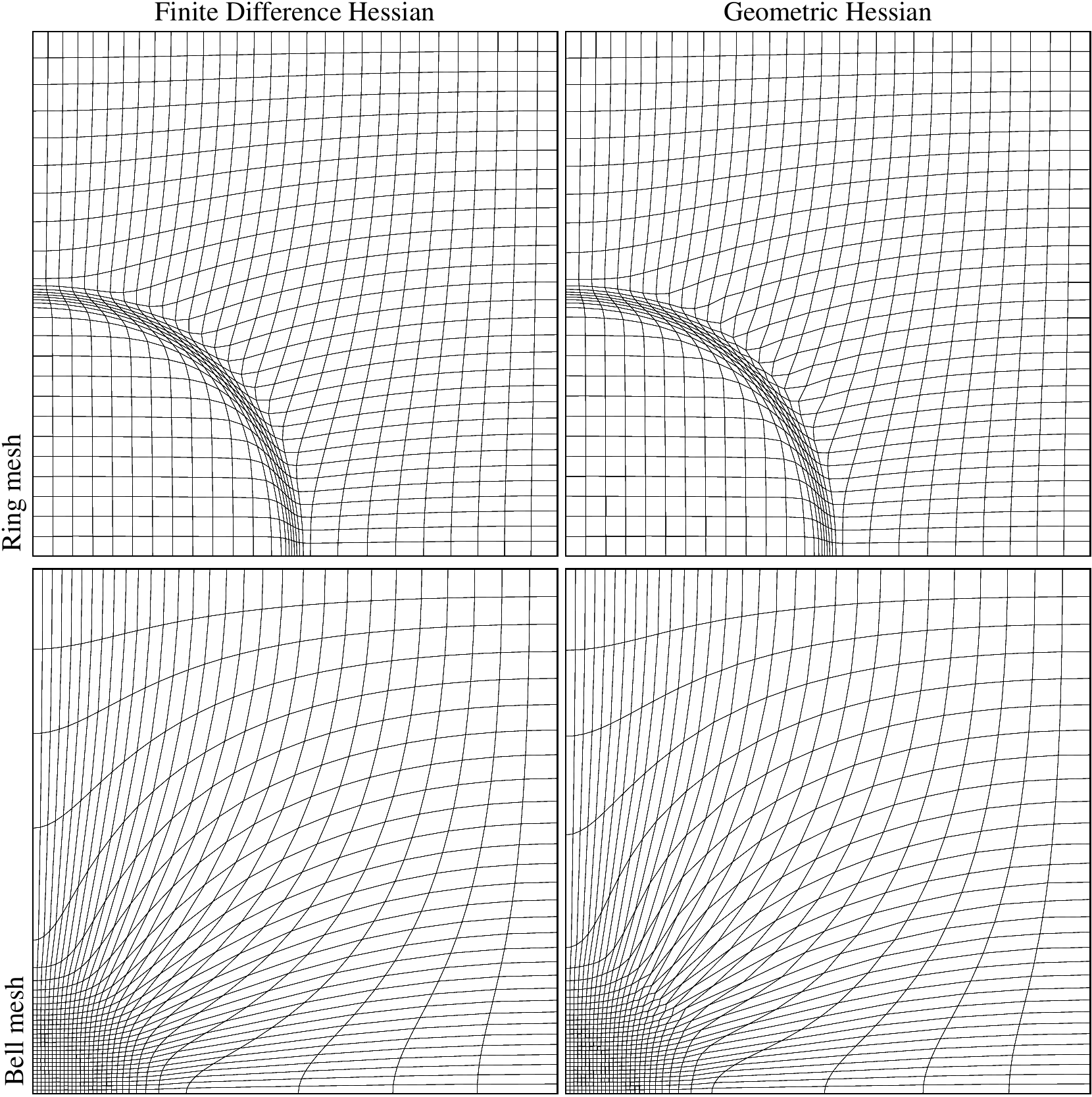}

\caption{%
A quarter of the ring and bell meshes using the finite difference
and volume Hesssian. \label{fig:planeMeshes}%
}
\end{figure}

The meshes in fig. \ref{fig:planeMeshes} calculated using both Hessian
techniques are similar to each other and they are also similar to
the meshes generated by \citet{BRW15}.

\subsection{Convergence of the Monge-Amp\`ere Solution in Euclidean Geometry\label{sub:convPlane}}

\begin{figure}
\includegraphics[width=1\linewidth]{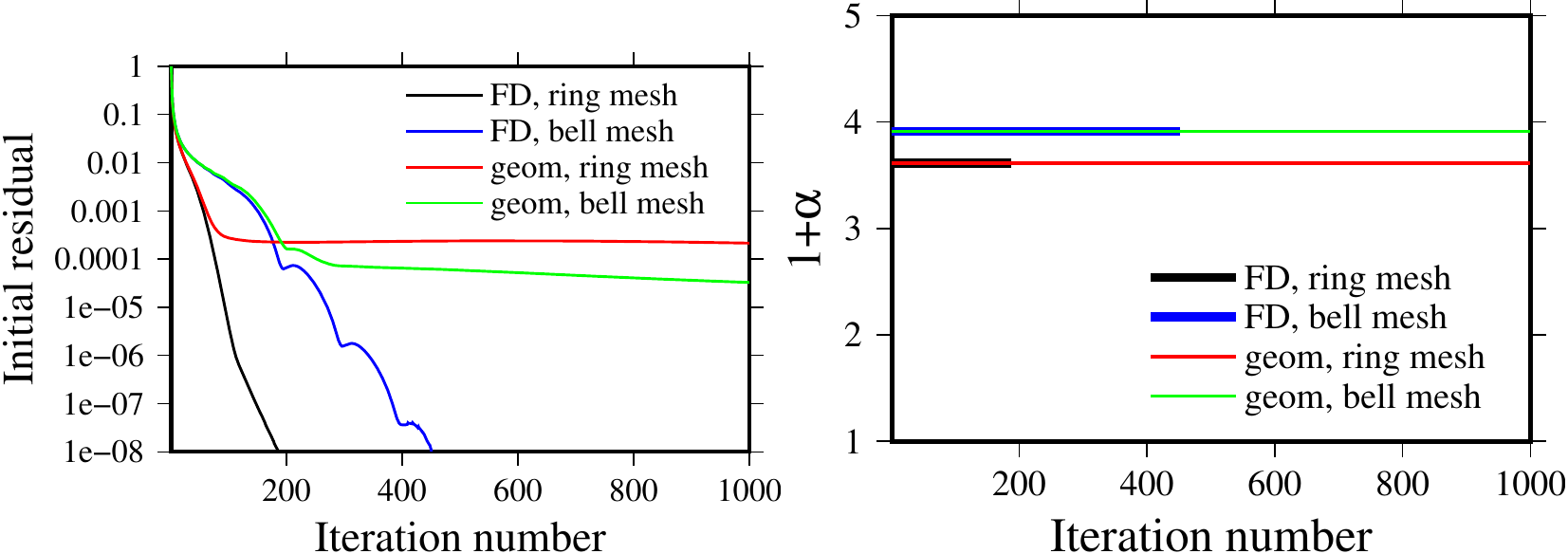}

\caption{%
Convergence of initial residual of the solution of the Poisson equation
for each fixed point iteration and $1+\alpha$ as a function of iteration
number for the planar meshes generated with the monitor function defined
in eqn. (\ref{eq:planeMonitor}). \label{fig:planeMeshConvergence}%
}
\end{figure}

Figure \ref{fig:planeMeshConvergence} on the left shows the initial
residual of the matrix solution as a function of iteration number
for the calculation of all of the meshes on the plane. Using the finite
difference Hessian, the solution converges rapidly but convergence
stalls when using the geometric Hessian. There are two possible reasons
for the stalling. Firstly, the Laplacian is no longer a good linearisation
of the geometric Hessian and secondly, a solution at this resolution
may not exist. Smoothing the monitor function removes the stall in
convergence and speeds convergence of all solutions (not shown) \marginpar{Rev2.8}since
smoothing removes the very abrupt changes in the monitor function.
However this is not the topic of this paper. 

The underelaxation factor, $1+\alpha$, is shown in the right of fig.
\ref{fig:planeMeshConvergence}. It never rises above the initial
value because the source term never increases above its initial value.
The initial value of $1+\alpha$ is simply $4\max|1-c/m|$ and so
$1+\alpha$ is independent of the Hessian calculation method. 

\begin{figure}
\includegraphics[width=1\linewidth]{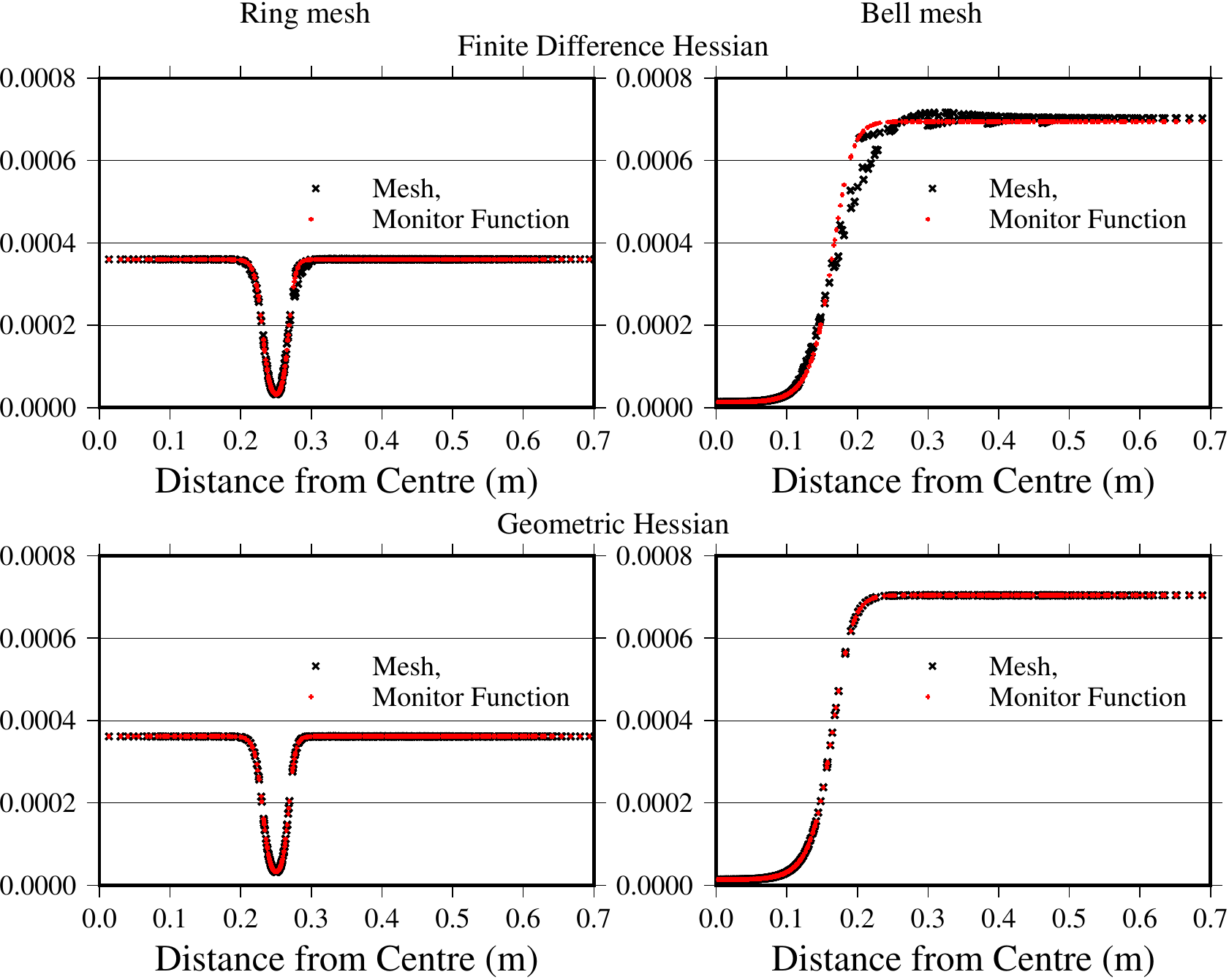}

\caption{%
Cell area as a function of distance from the axis of symmetry in comparison
to that implied by the monitor function ($c/m$) for the planar meshes
generated with the monitor function defined in eqn. (\ref{eq:planeMonitor}).
\label{fig:planeMeshArea}%
}
\end{figure}

In order to diagnose how closely the final mesh equidistributes the
monitor function, we plot the cell area as a scatter plot for every
cell in the mesh as a function of the distance from the axis of symmetry
in fig. \ref{fig:planeMeshArea} in comparison with $c/m$. Using
the finite difference Hessian, there are discrepancies between $c/m$
and the cell area where the second derivative of $c/m$ is high. This
is because the discrete calculation of the Hessian is not a good approximation
of the cell area in these regions, where the derivatives of $\phi$
are varying rapidly. However, for the purpose of mesh generation,
these discrepancies do not look problematic. If the OT mesh is smoother
than that specified by the monitor function then it could be beneficial.
The meshes generated using the geometric Hessian are more accurately
equidistributed with respect to the monitor function, despite the
lack of convergence of the initial residual.

\subsection{Optimally transported and Centroidal Voronoi meshes on the Sphere
\label{sub:sphMeshes}}

Meshes are generated using the geometric Hessian in order to compare
with the locally refined centroidal Voronoi meshes generated using
Lloyd's algorithm by \citet{RJG+11}. We use a monitor function given
by the square root of the density function of the corrected eqn. (4)
of \citet{RJG+11}:
\begin{equation}
m\left(\mathbf{x}_{i}\right)=\sqrt{\frac{1}{2\left(1+\gamma\right)}\left(\tanh\frac{\beta-||\mathbf{x}_{c}-\mathbf{x}_{i}||}{\alpha}+1\right)+\gamma}
\end{equation}
where $\mathbf{x}_{c}$ is the centre of the refined region which
has a latitude of $30^{o}$ and a longitude of $90^{o}$. $||\mathbf{x}_{c}-\mathbf{x}_{i}||$
is the geodesic distance between the points and is computed as $\cos^{-1}\left(\mathbf{x}_{c}\cdot\mathbf{x}_{i}\right)$.
$\alpha$ and $\beta$ are in radians and they control the size of
the refined region and the distance over which the mesh changes from
fine to coarse resolution. We follow \citet{RJG+11} and use $\alpha=\pi/20$
and $\beta=\pi/6$. $\gamma$ controls the ratio between the finest
and coarsest resolution and we use $\gamma=\left(1/2\right)^{4}$,
$\gamma=(1/4)^{4}$, $\gamma=(1/8)^{4}$ and $\gamma=(1/16)^{4}$
for meshes with finest mesh spacing factors of 2, 4, 8 and 16 times
smaller than that of the coarsest. Following \citet{RJG+11}, these
meshes are referred to as X2, X4, X8 and X16. 

\marginpar{Rev2.3} The computational meshes are hexagonal icosahedra
which consist of 12 pentagons and \mbox{$10(2^{2n}-1)$} hexagons
for $n=3,4,5,6$. \marginpar{Rev2.9}These quasi-uniform meshes can
be referred to as the X1 meshes. The X1 meshes are not shown but the
X1 centroidal Voronoi meshes and the OT meshes are slightly different.
The X1 centroidal Voronoi meshes are generated using Lloyd's algorithm
which guarantees that the X1 meshes are nearly centroidal (the Voronoi
generating point is co-located with the cell centre) whereas the X1
OT meshes are the \citet{HR95b} version of the hexagonal icosahedron,
optimised to reduce face skewness. The X2, X4, X8 and X16 meshes of
2,562 cells are shown in figure \ref{fig:X24meshes} with the ratio
between the cell area and the average cell area coloured. 

\begin{figure}
\includegraphics[width=1\linewidth]{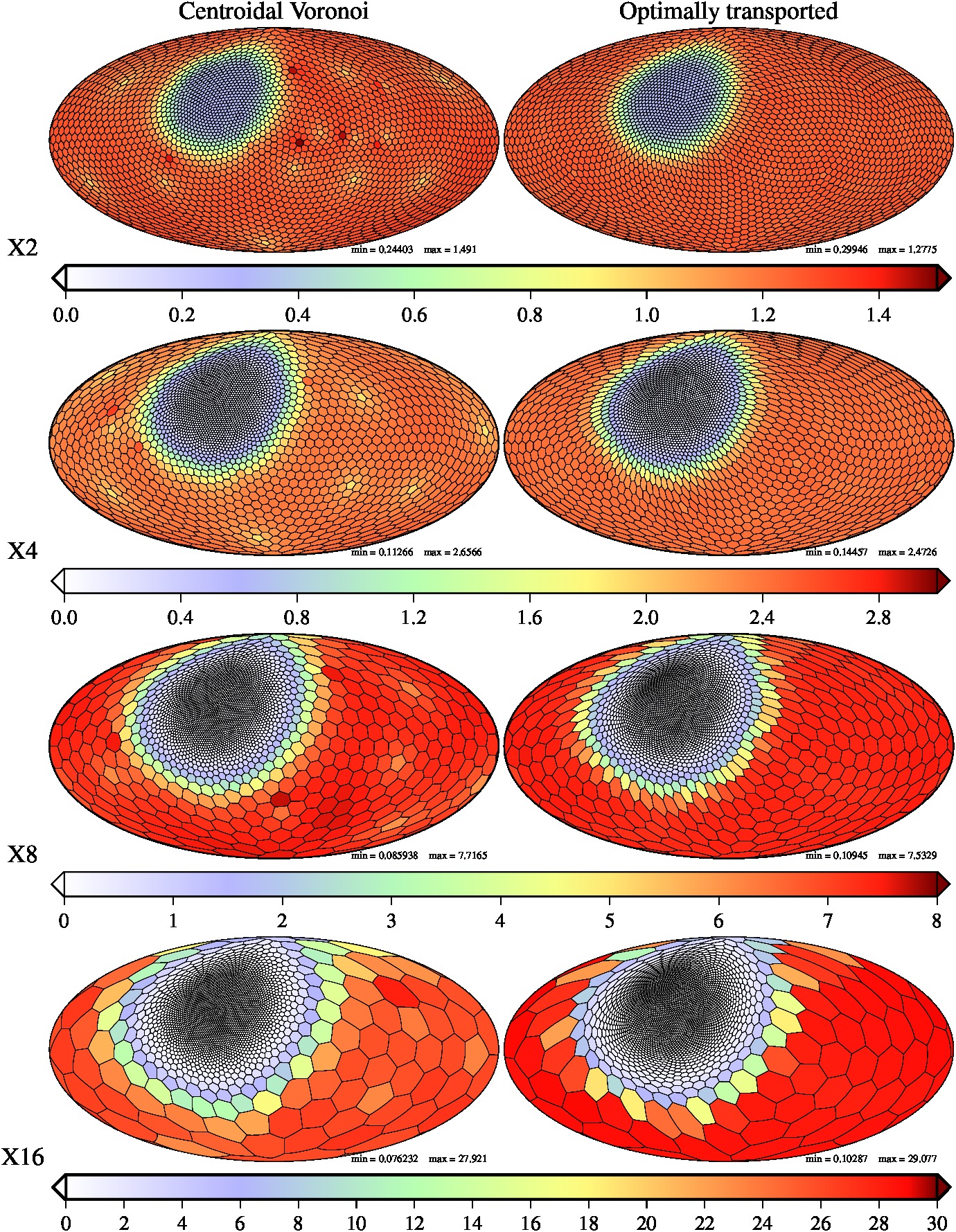}

\caption{%
X2, X4, X8 and X16 meshes of the sphere of 2,562 cells and the cell
volumes relative to the average cell volume in colour. The surface
of the entire sphere is displayed using a Hammer equal-area projection
\citep{WSS+13}. \label{fig:X24meshes}%
}
\end{figure}

The centroidal Voronoi meshes  in fig. \ref{fig:X24meshes} are orthogonal,
close to centroidal and the mesh topology (connectivity) is different
for all the refinement levels. (Lloyd's algorithm generates meshes
that are centroidal relative to a density function which means that
they are not centroidal when the centroid is simply the centre of
mass.) The OT meshes all have the same connectivity and they are centroidal
but not orthogonal. (Orthogonality could be achieved by Voronoi tesselating
the meshes, at the expense of centroidality, see section \ref{sub:betterMeshQuality}.)

All of the OT meshes in fig. \ref{fig:X24meshes} have regions of
anisotropy in between the fine and coarse regions whereas the centroidal
Voronoi meshes remain isotropic and the mesh topology changes between
resolutions. The anisotropy will be investigated further in section
\ref{sub:meshQualitySph}. Before looking in more detail at the mesh
quality in section \ref{sub:meshQualitySph}, we will examine diagnostics
of convergence in section \ref{sub:convSph}.

\subsection{Convergence of the Monge-Amp\`ere Solution on the Sphere\label{sub:convSph}}

\begin{figure}
\includegraphics[width=1\linewidth]{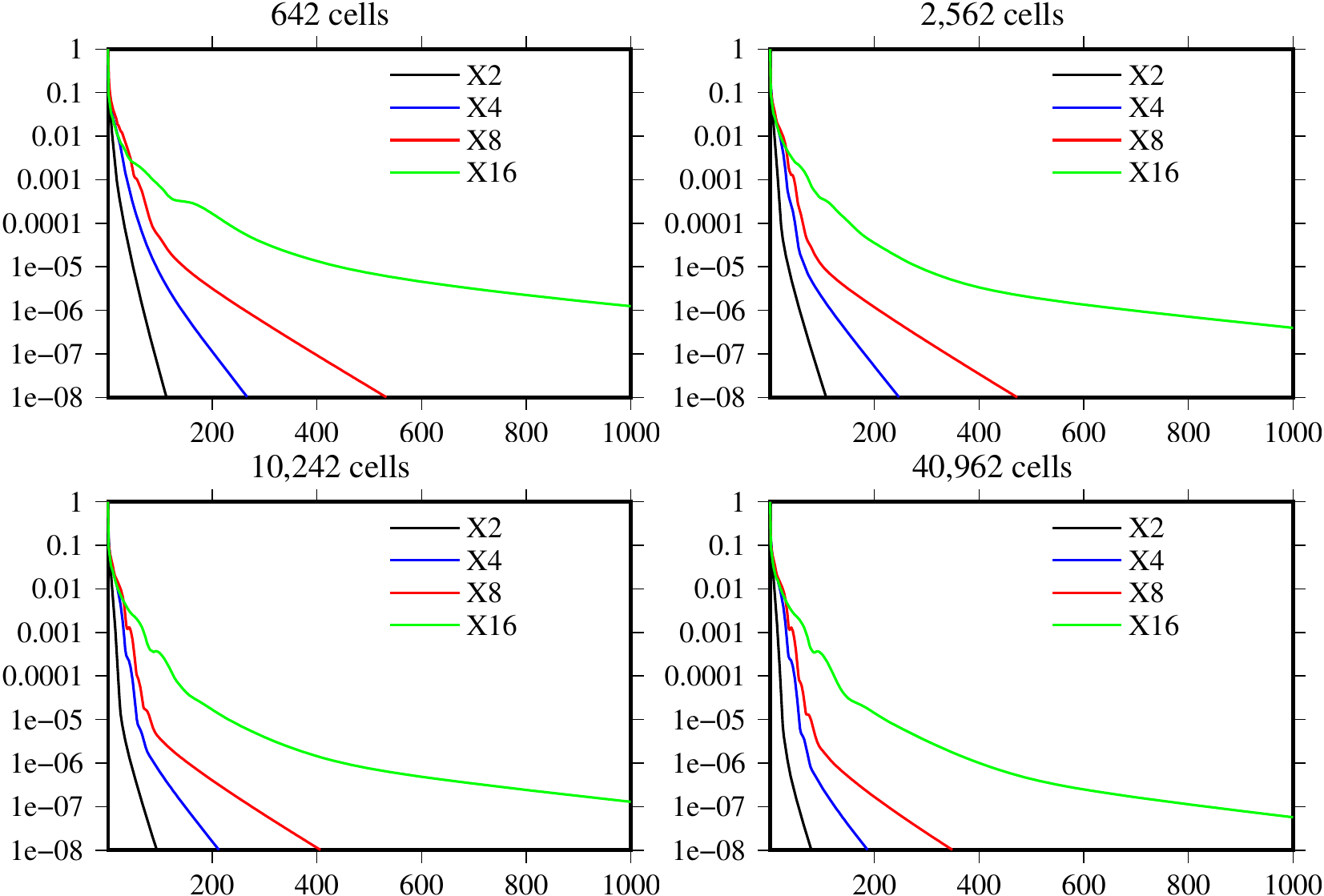}

\caption{Convergence of initial residuals with iteration number for all of
the optimally transported meshes on the sphere \label{fig:convergence}}
\end{figure}

Convergence of the initial residual is shown in fig. \ref{fig:convergence}
for the X2-X16 meshes of various resolutions. Convergence is rapid
for the X2 and X4 meshes but slows after around 100 iterations, once
the non-linearities have grown and the Laplacian is no longer a good
approximation for the Hessian and once the exact solutions becomes
difficult to achieve at finite resolution. It appears from fig. \ref{fig:convergence}
that the number of iterations reduces as mesh size increases.

\begin{figure}
\includegraphics[width=1\linewidth]{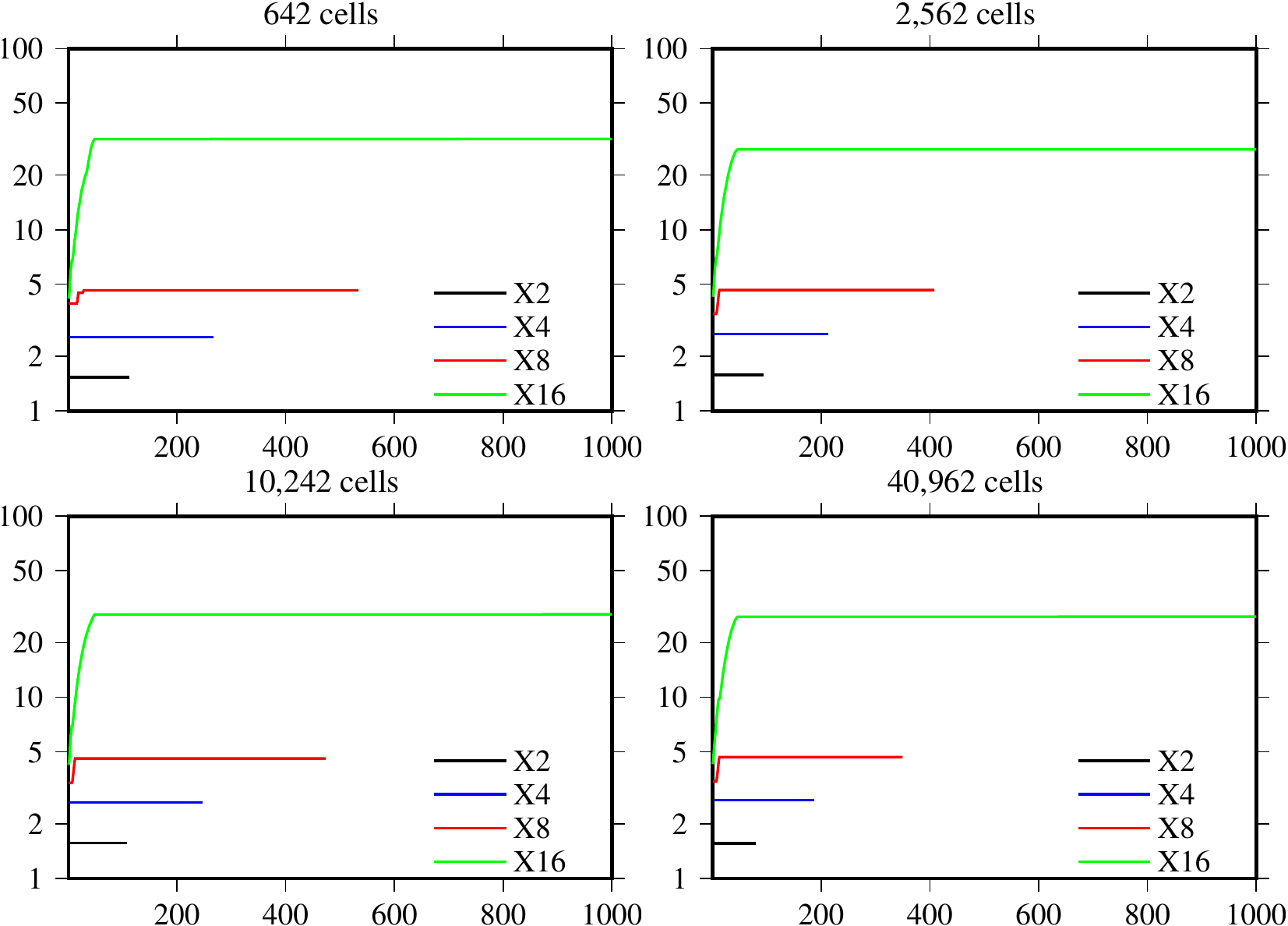}

\caption{Under-relaxation factor, $1+\alpha$, for with iteration number for
all of the optimally transported meshes on the sphere\label{fig:boostLaplacian}}
\end{figure}

The underelaxation factor, $1+\alpha$, is shown in fig. \ref{fig:boostLaplacian}.
Unlike in the Euclidean case, $1+\alpha$ does rise after initialisation.
This implies that the maximum of the source term increases before
it decreases. However the initial residual \marginpar{Rev3.13}is
monotonically decreasing during these early iterations. This is because
the initial residual is a mean over the whole domain whereas $1+\alpha$
is set from the maximum of the source term. 

\begin{figure}
\includegraphics[width=1\linewidth]{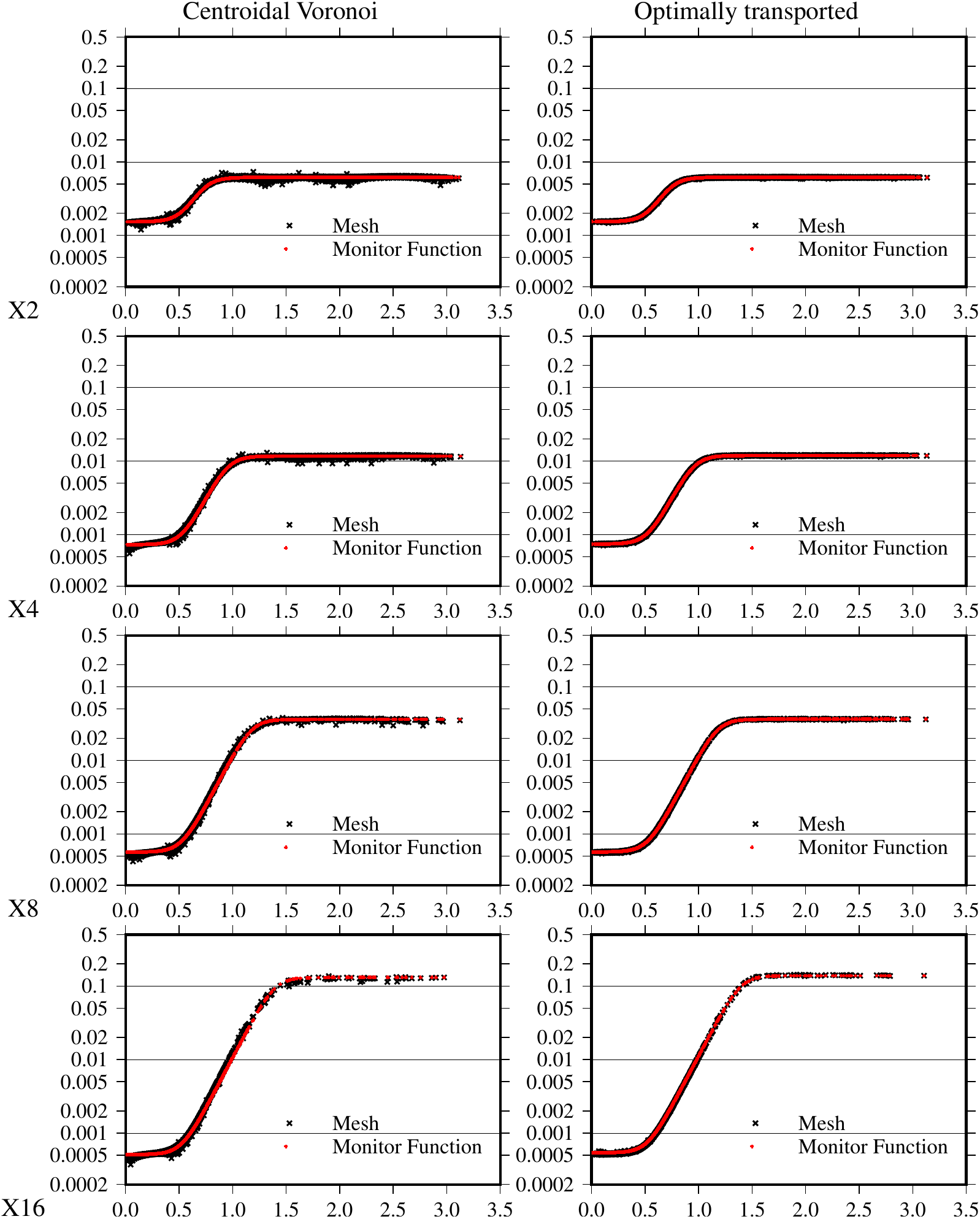}

\caption{Cell area as a function of geodesic distance to the centre of the
refined region in comparison to that implied by the monitor function
($c/m$) for the X2, X4, X8 and X16 meshes of 2,562 cells \label{fig:areaDxX2X4}}
\end{figure}

The convergence of the cell area with the monitor function is shown
in fig. \ref{fig:areaDxX2X4} as scatter plots of cell area change
as a function of distance to the centre of the refined region. As
occurred in Euclidean geometry, the geometric Hessian gives accurate
equidistribution.

\subsection{Mesh Quality on the Sphere\label{sub:meshQualitySph}}

\begin{figure}
\includegraphics[width=1\linewidth]{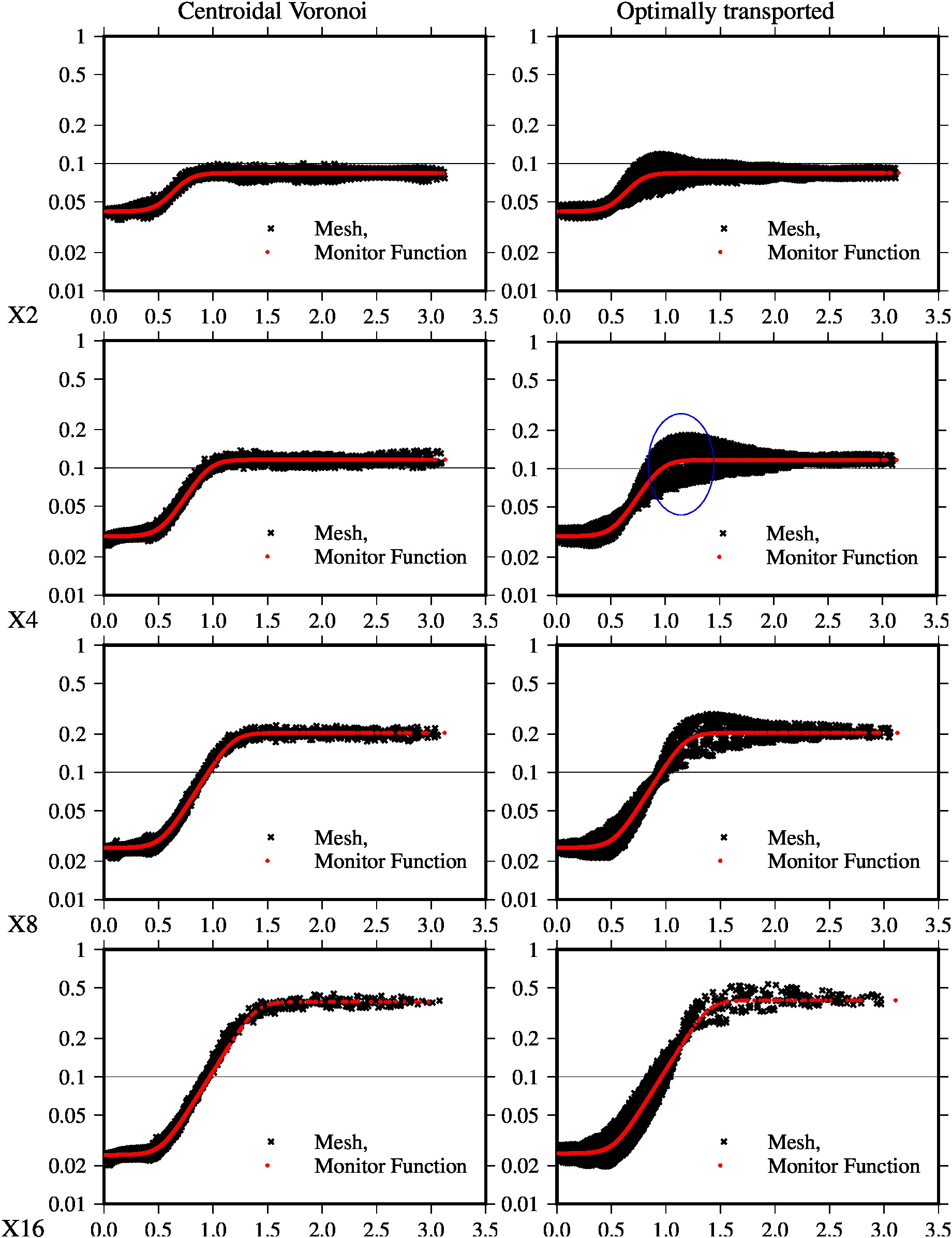}

\caption{Cell-centre to cell-centre geodesic distance ($|\mathbf{d}_{x}|$
as defined in section \ref{sec:diags}) as a function of geodesic
distance to the centre of the refined region in comparison to that
implied by the monitor function ($\sqrt{2c\ \tan(\pi/3)/3m}$) for
the X2, X4, X8 and X16 meshes of 2,562 cells. The blue encircled region
for X4 shows where the OT mesh is anisotropic. \label{fig:distDxX2X4}}
\end{figure}

Scatter plots of the cell-centre to cell-centre distance, $|\mathbf{d}_{x}|$,
as a function of distance to the centre of the refined region are
shown in fig. \ref{fig:distDxX2X4} for the X2-X16 meshes of 2,562
cells on the sphere. This shows that the centroidal Voronoi meshes
are close to isotropic whereas the OT meshes have high anisotropy
where the second derivative of the monitor function is high. In particular,
a region of anisotropy is indicated by a blue ring for the X4 OT mesh
in fig. \ref{fig:distDxX2X4}: there is a wide range of $|\mathbf{d}_{x}|$
at the same distance to the centre of the refined region, indicating
anisotropy. This anisotropy could be reduced by smoothing the monitor
function.

Unlike the meshes on the plane, the meshes on the sphere are isotropic
in the uniformly coarse region, due to the isotropy of the domain
relative to the centre of refinement. This could be an advantage of
using r-adaptivity on the sphere over its use in Euclidean geometry
with corners. However the meshes on the sphere still have a bulge
in $|\mathbf{d}_{f}|$ on the edge of the coarse region. This is not
ideal for atmospheric simulations since global errors are often proportional
to the largest $|\mathbf{d}_{x}|$ \citep{RJG+11}.

\begin{figure}
\includegraphics[width=1\linewidth]{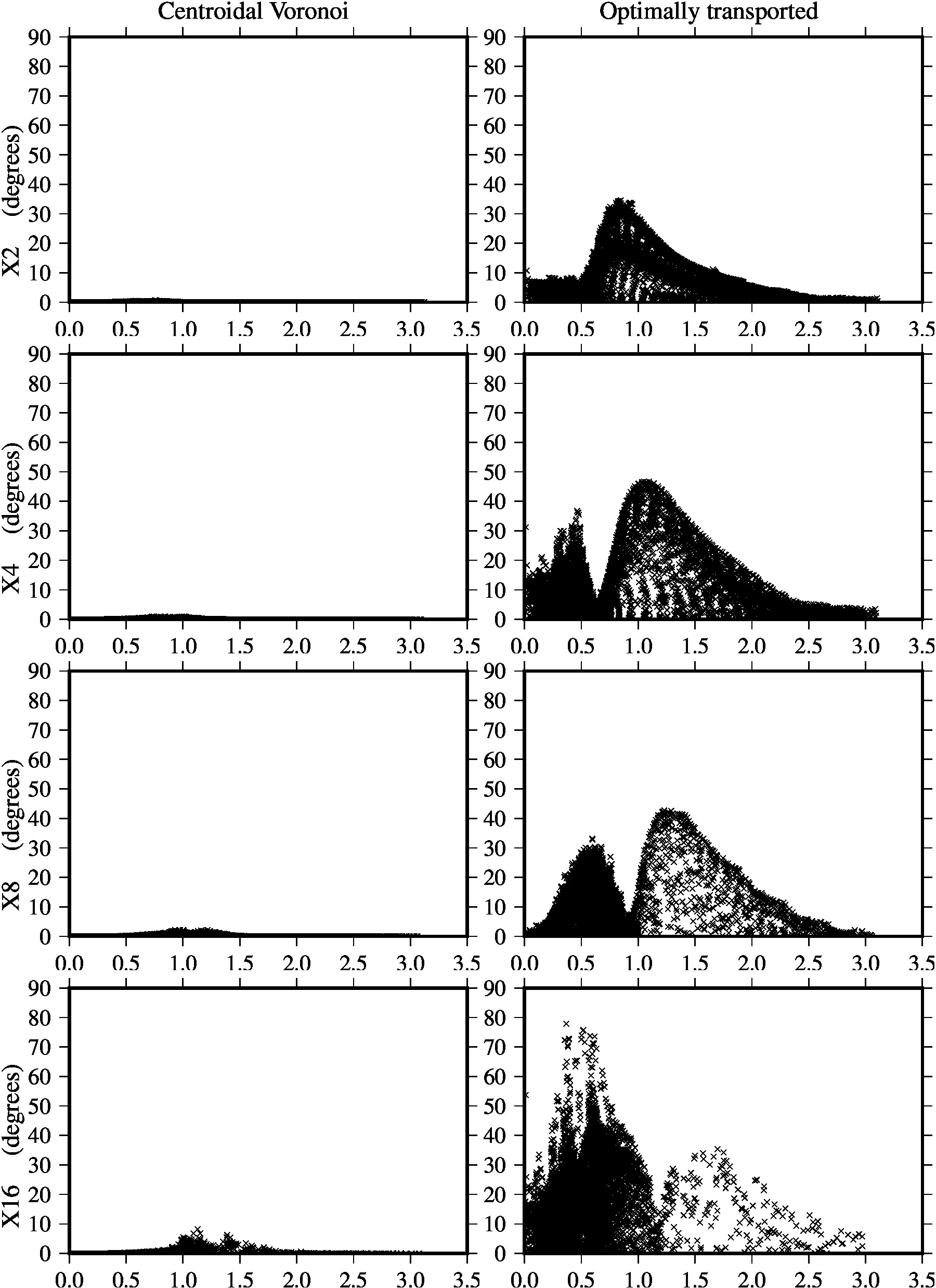}

\caption{Non-orthogonality of the meshes on the sphere (as defined in eqn.
\ref{eq:nonOrthog}) as a function of geodesic distance to the centre
of the refined region for the X2-X16 meshes of 2,562 cells\label{fig:meshOrthogX2X4}}
\end{figure}

\begin{figure}
\includegraphics[width=1\linewidth]{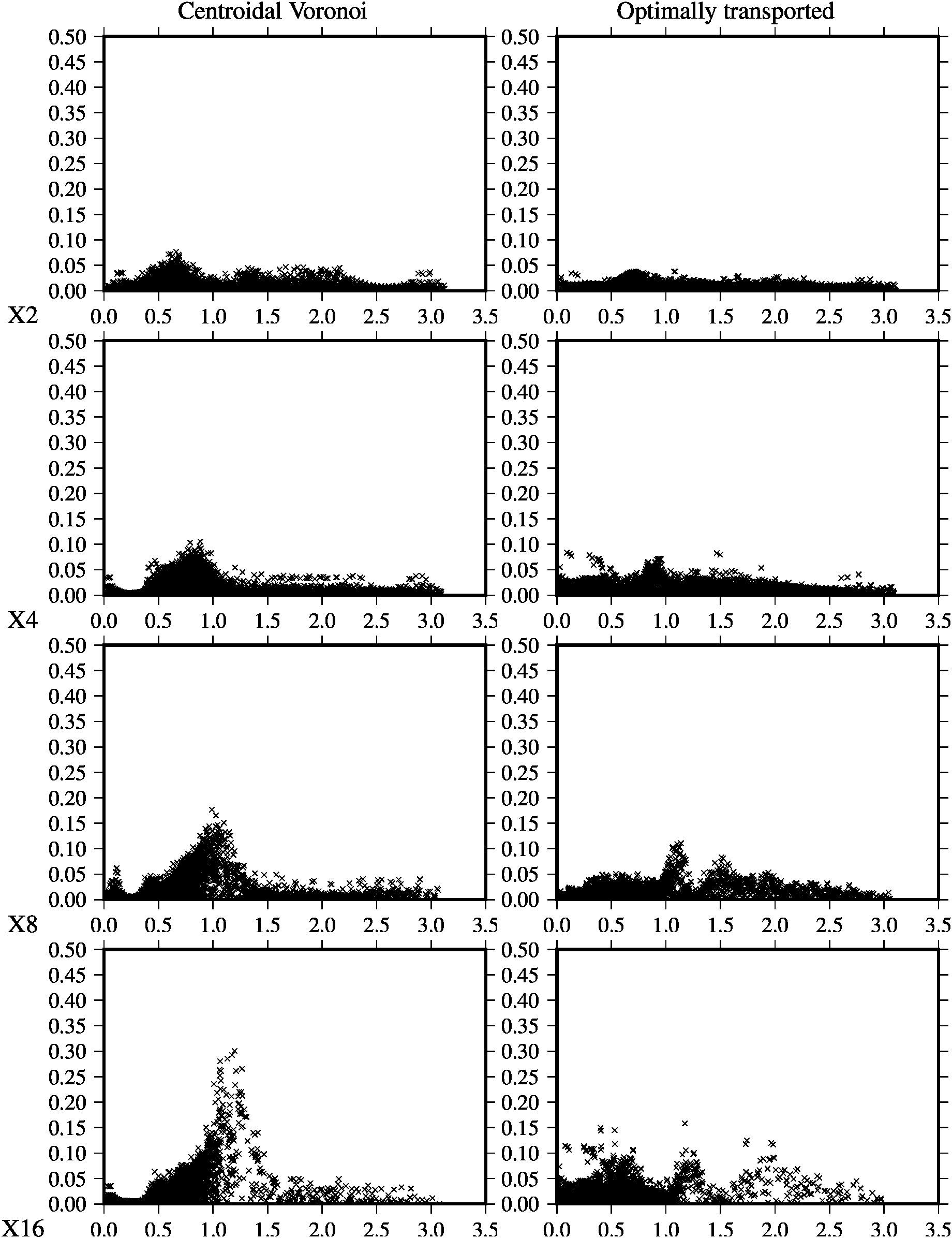}

\caption{Face skewness ($d_{s}/|\mathbf{d}_{x}|$, as defined in eqn. \ref{eq:skewness})
as a function of geodesic distance to the centre of the refined region
for the X2 and X4 meshes of 2,562 cells\label{fig:meshSkewX2X4}}
\end{figure}

The orthogonality and skewness of the faces of the OT X2-X16 meshes
on the sphere are shown in figs. \ref{fig:meshOrthogX2X4} and \ref{fig:meshSkewX2X4}
in comparison to the centroidal Voronoi meshes. Lloyd's algorithm
with a non-uniform monitor function generates exactly orthogonal,
non-centroidal meshes and so for comparison with the OT meshes, the
Voronoi meshes are made exactly centroidal at the expense of orthogonality
by using the cell centroid as the cell centre rather than using the
Voronoi generating point. Even so, they remain very close to orthogonal
in comparison to the OT meshes which have high non-orthogonality where
the second derivative of the monitor function is high (for this test
case). In fact the non-orthogonality reaches over 70 degrees for some
cells in the X16 mesh. This is unlikely to be a good mesh for simulation.
This problem will be investigated further in section \ref{sub:betterMeshQuality}.

The OT meshes have less face skewness, $d_{s}/|\mathbf{d}_{x}|$,
than the centroidal Voronoi meshes (fig \ref{fig:meshSkewX2X4}) which
could be advantageous for numerical methods whose errors depend on
skewness. For example, \citet{HR95b} described how to optimise orthogonal
meshes to reduce skewness for low-order finite-volume discritisations.

\subsection{Improving Convexity\label{sub:betterMeshQuality}}

The OT X16 meshes presented in sections \ref{sub:sphMeshes}-\ref{sub:meshQualitySph}
have some large non-orthogonality at regions where the resolution
is changing rapidly (fig \ref{fig:meshOrthogX2X4}). The reason for
this can be seen more clearly in a zoomed regions of the meshes in
the second row of fig. \ref{fig:X16zoom}. The double zoomed plot
shows that some of the cells are not convex. This implies that the
calculation of $\nabla_{v}\phi$ has in fact not yielded a smooth
vector field, despite the development of the Goldilocks stencil with
the aim of achieving a smooth $\nabla_{v}\phi$ on the smallest possible
stencil. The Goldilocks stencil does give a much smoother $\nabla_{v}\phi$
than the small stencil (first row of fig \ref{fig:X16zoom}). If instead
we interpolate $\nabla\phi$ from faces onto vertices which entails
the use of the larger stencil (secn \ref{par:wideGradv}), the non-convex
cells are not generated (third row of fig \ref{fig:X16zoom}). Alternatively,
a Voronoi tessellation can be created using the cell centres of the
Goldilocks stencil mesh as generating points (bottom row of fig. \ref{fig:X16zoom}).
This also eliminates non-convex cells. 

\begin{figure}
\includegraphics[width=1\linewidth]{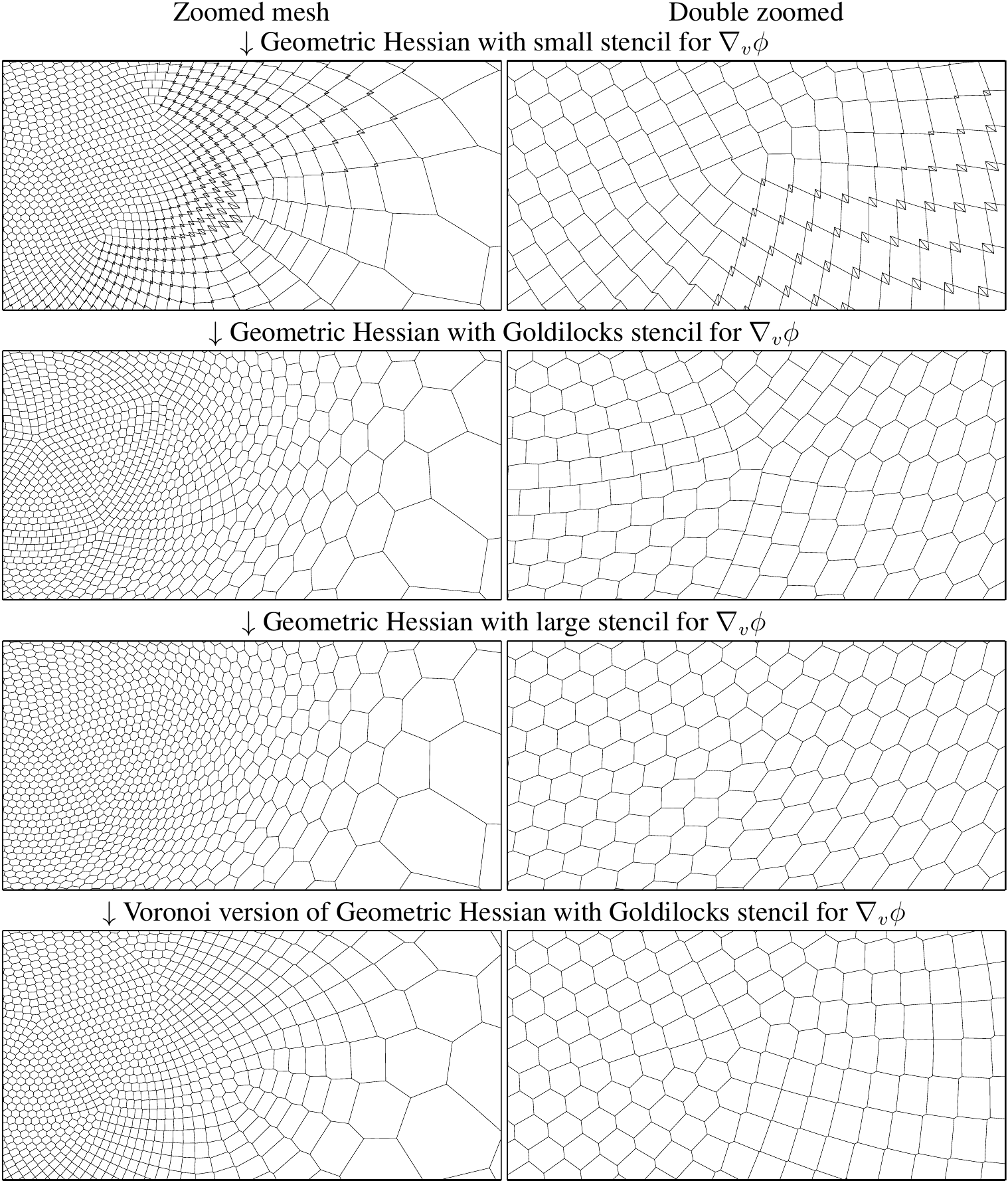}

\caption{Zoomed and double zoomed X16 meshes with 2,562 cells using different
calculations of $\nabla_{v}\phi$ and Voronoi versions.\label{fig:X16zoom}}
\end{figure}

\begin{figure}
\includegraphics[width=1\linewidth]{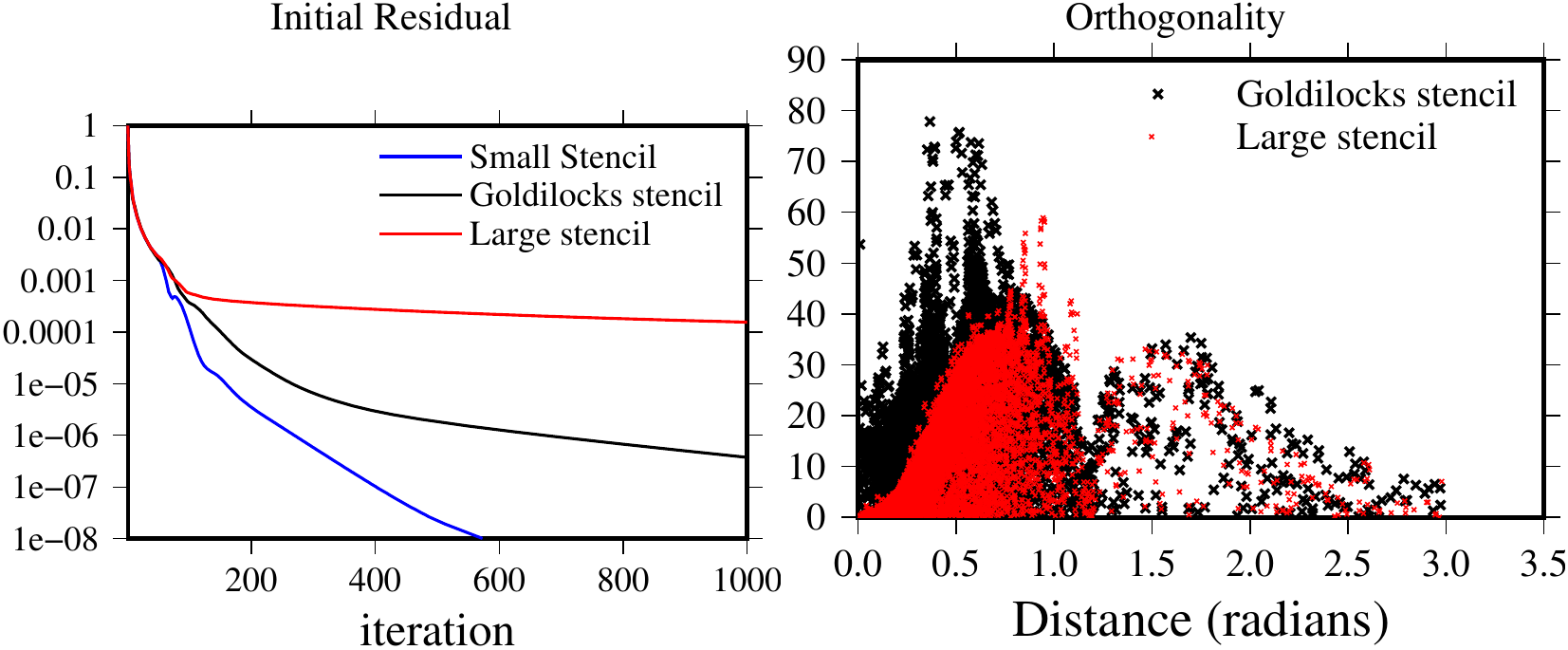}

\caption{Comparisons of convergence of initial residuals and orthogonality
for the X16 meshes of 2,562 cells calculated using the geometric Hessian
but with different calculations of $\nabla_{v}\phi$.\label{fig:gradp_5_16}}
\end{figure}

The problem with the large stencil calculation of $\nabla_{v}\phi$
is that convergence is slowed and orthogonality is only reduced a
little (fig \ref{fig:gradp_5_16}). Therefore it is necessary to consider
the Voronoi tessellation of the cell centres (final row of fig. \ref{fig:X16zoom}).
This modification does not affect the convergence since the Voronoi
tessellation is calculated after convergence of the Monge-Amp\`ere
solution. This mesh is insensitive to the calculation of $\nabla_{v}\phi$
but is no longer exactly equidistributed because the cell areas change
a little (locally) when the Voronoi tesselation is calculated (fig
\ref{fig:meshDiags_Voronoi_5_16}). However these area changes are
very small and simply smooth out the curve where the monitor function
flattens out into the coarse region. Fig \ref{fig:meshDiags_Voronoi_5_16}
also shows that the Voronoi version is more orthogonal than the large
stencil version, the anisotropy is similar and the skewness is increased.
However, the connectivity may be changed slightly.

\begin{figure}
\includegraphics[width=1\linewidth]{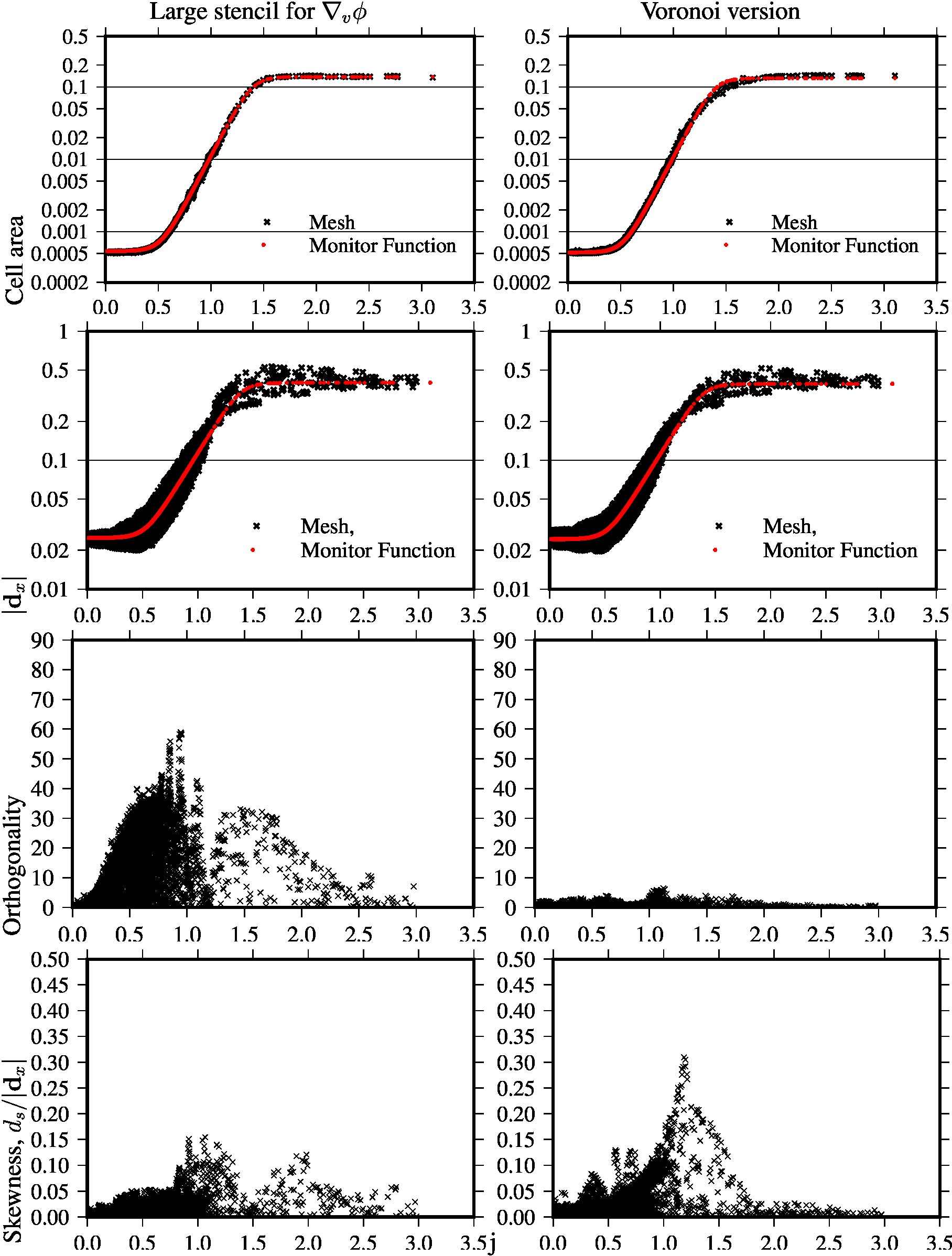}

\caption{Mesh diagnostics as a function of distance from the centre of the
refined region for the X16 meshes of 2,562 cells using the large stencil
for $\nabla_{v}\phi$ on the left and using the Voronoi tesselation
on the right. \label{fig:meshDiags_Voronoi_5_16}}
\end{figure}

\subsection{Optimally Transported Meshes using Precipitation as a Monitor Function\label{sub:ppt}}

In order to demonstrate the numerical solution of the Monge-Amp\`ere
type equation using realistic data as a monitor function, meshes are
generated based on the daily average precipitation rate from the NOAA-CIRES
20th Century Reanalysis version 2 (\citealp{CWS11}, \url{http://www.esrl.noaa.gov/psd/data/gridded/data.20thC_ReanV2.html})
on 9 Oct 2012. The numerical solution of the Monge-Amp\`ere equation
uses two near uniform hexagonal-icosahedral meshes of 2,562 and 10,242
cells. The re-analysis precipitation ranges from zero to $p_{\max}=8.73\times10^{-4}\text{kg}\text{m}^{-2}\text{s}^{-1}$.
A strictly positive, non-dimensional monitor function, $m$, is defined
from the precipitation rate, $p$ using:
\begin{equation}
m=\frac{p+p_{\min}}{p_{\max}+p_{min}}
\end{equation}
where \marginpar{Rev3.14}\sout{the minimum allowable values is
set to} $p_{\min}=10^{-5}\text{kg}\text{m}^{-2}\text{s}^{-1}$. The
resulting meshes are shown in fig. \ref{fig:pptMesh} (and are highly
sensitive to the value of $p_{\min}$ used). Precipitation clearly
could not be used as a monitor function for a dynamically adapting
simulation of the global atmosphere since it is strongly resolution
dependent. Instead, monitor functions with less resolution dependency
should be developed. Reanalysis precipitation is used here just as
a demonstration of the solution when using realistic meteorological
data.

\begin{figure}
\includegraphics[width=1\linewidth]{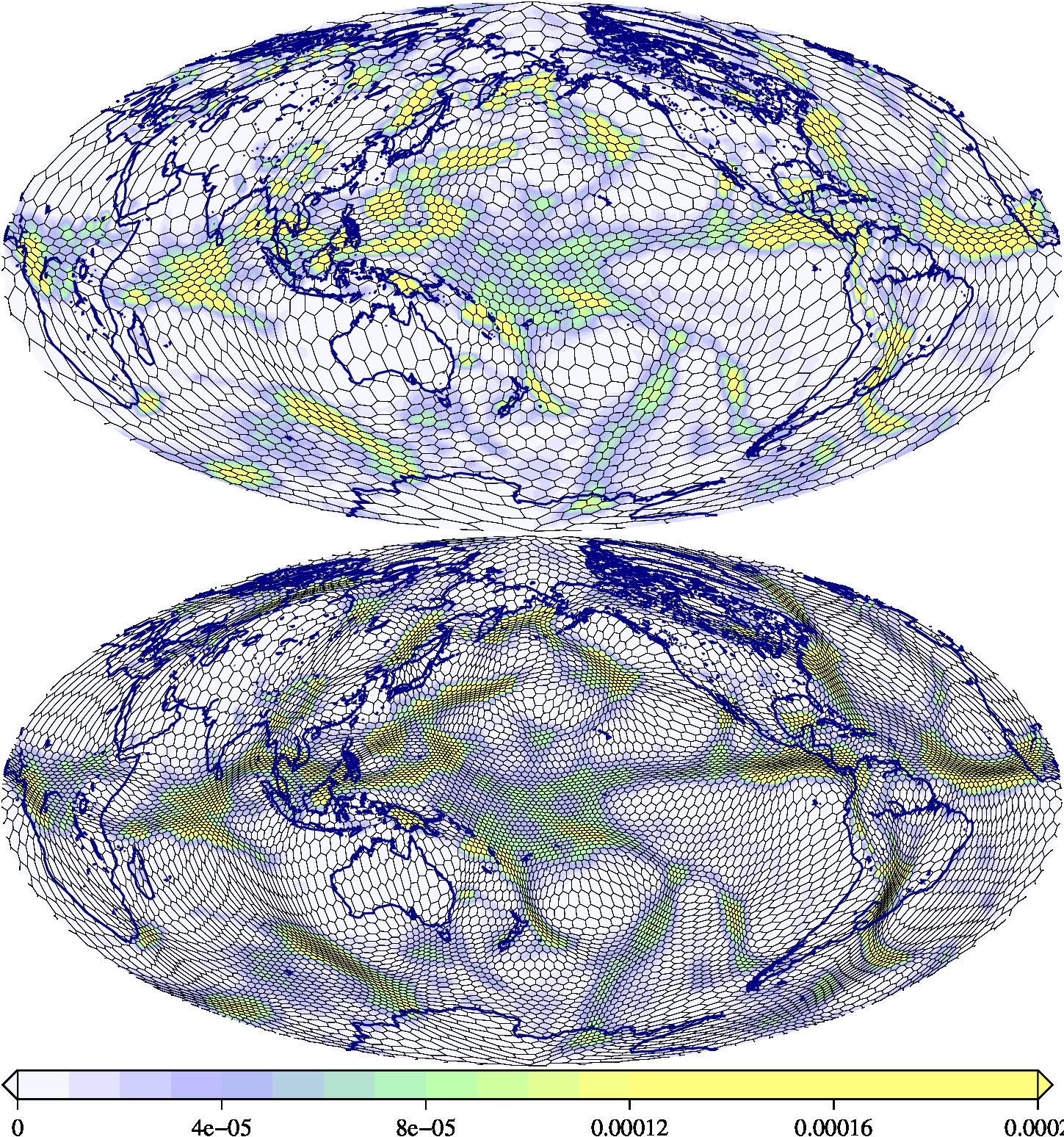}

\caption{Meshes of 2,562 and 10,242 cells generated based on the monitor function
of precipitation on 9 Oct 2012. The colours show the precipitation
rate in $\text{kg}\text{m}^{-2}\text{s}^{-1}$. The surface of the
entire sphere is displayed using a Hammer equal-area projection \citep{WSS+13}.
\label{fig:pptMesh}}
\end{figure}

The meshes resolved based on precipitation show excellent refinement
along fronts, particularly looking at South America. The Inter-tropical
convergence zone is also refined in the latitudinal direction. However,
based on the limitations of r-adaptivity, the Inter-tropical convergence
zone cannot be refined everywhere around the equator in the longitudinal
direction. If this were a requirement, a mesh starting with more points
around the equator should be used. This is the subject of future research.

\section{Conclusions\label{sec:concs}}

A technique for generating optimally transported (OT) meshes, solving
a Monge-Amp\`ere type equation on the surface of the sphere, has
been developed in order to generate meshes which are equidistributed
with respect to a monitor function. Equations of Monge-Amp\`ere type
have not before been solved numerically on the surface of a sphere.
We show that a unique solution to the optimal mesh transport problem
on the sphere exists and exponential maps are used to create the map
from the old to the new mesh. We introduce a geometric interpretation
of the Hessian rather than a numerical approximation which is accurate
on the surface of the sphere. In order to create a semi-implicit algorithm,
a new linearisation of the Monge-Amp\`ere equation is proposed which
includes a Laplacian term and the resulting Poisson equation is solved
at each fixed-point iteration.

To validate the novel aspects of the numerical method, we first reproduce
some known solutions of the Monge-Amp\`ere equation on a two dimensional
plane and find that the geometric interpretation of the Hessian leads
to more accurate equidistribution than a finite difference discretisation.
We also generate OT meshes of polygons on the sphere to compare with
the centroidal Voronoi meshes generated by \citet{RJG+11}. The geometric
Hessian \sout{created} accurately equidistributed meshes on the
surface of the sphere. The algorithm is found to be sensitive to the
numerical method used to calculate the gradient of the mesh potential
(the map to the new mesh) with a compact stencil leading to non-convexity
and a large stencil leading to very slow convergence. The mesh tangling
can be eliminated by creating a Voronoi tessellation of the cell centres
of the final mesh. The exact solution of the OT problem on the sphere
is c-convex which means that the mesh should not tangle. A numerical
method which reproduces this property will be the subject of future
work. 

The meshes generated have advantages and disadvantages relative to
centroidal Voronoi meshes generated using Lloyd's algorithm. In principle,
OT meshes should be much faster to generate, although we do not yet
have timing comparisons. OT meshes do not change their connectivity
with respect to the base, uniform mesh, so these meshes can be used
in r-adaptive simulations. In comparison to centroidal Voronoi meshes,
the OT meshes are non-orthogonal and less isotropic but have less
face skewness. In order to overcome the non-orthogonality of OT meshes,
the OT technique can be used to generate Voronoi meshes.

Finally, we generate a mesh using a monitor function based on reanalysis
precipitation. This mesh refines smoothly along atmospheric fronts
and convergence zones and provides inspiration for using r-adaptivity
for global atmospheric modelling. Suitable monitor functions for r-adaptive
simulations is also the subject of future work.

\section*{Acknowledgements}

Weller acknowledges support from NERC grant NE/H015698/1 and Browne
from NERC grants NE/J005878/1 and NE/M013693/1. Budd acknowledges
support the Pacific Institute for the Mathematical Sciences (PIMS)
who have funded his sabbaticals.

\bibliographystyle{abbrvnat}
\bibliography{numerics,bibfile}

\begin{thebibliography}{42}
\providecommand{\natexlab}[1]{#1}
\providecommand{\url}[1]{\texttt{#1}}
\expandafter\ifx\csname urlstyle\endcsname\relax
  \providecommand{\doi}[1]{doi: #1}\else
  \providecommand{\doi}{doi: \begingroup \urlstyle{rm}\Url}\fi

\bibitem[Ahmad(2004)]{ahmad2004}
N.~Ahmad.
\newblock \emph{The Geometry of Shape Recognition via the {Monge-Kantorovich}
  Optimal Transport Problem}.
\newblock PhD thesis, Providence, RI, USA, 2004.
\newblock AAI3134240.

\bibitem[Benamou et~al.(2010)Benamou, Froese, and Oberman]{BFO10}
J.-D. Benamou, B.~Froese, and A.~Oberman.
\newblock Two numerical methods for the elliptic {Monge-Amp\'ere} equation.
\newblock \emph{ESAIM: Mathematical Modelling and Numerical Analysis},
  44\penalty0 (4):\penalty0 737--758, 2010.

\bibitem[Berger and Oliger(1984)]{BO84}
M.~Berger and J.~Oliger.
\newblock Adaptive mesh refinement for hyperbolic partial differential
  equations.
\newblock \emph{J. Comput. Phys.}, 53:\penalty0 484--512, 1984.

\bibitem[Brenier(1991)]{brenier1991}
Y.~Brenier.
\newblock {Polar Factorization and Monotone Rearrangement of Vector-Valued
  Functions}.
\newblock \emph{Communications on Pure and Applied Mathematics}, XLIV:\penalty0
  375--417, 1991.

\bibitem[Browne et~al.(2014)Browne, Budd, Piccolo, and Cullen]{BBPC14}
P.~Browne, C.~Budd, C.~Piccolo, and M.~Cullen.
\newblock Fast three dimensional r-adaptive mesh redistribution.
\newblock \emph{J. Comput. Phys.}, 2014.

\bibitem[Budd and Williams(2006)]{BW06}
C.~Budd and J.~Williams.
\newblock Parabolic {Monge-Amp\`{e}re} methods for blow-up problems in several
  spatial dimensions.
\newblock \emph{J. Phys. A}, 39\penalty0 (19):\penalty0 5425--5444, 12 May
  2006.

\bibitem[Budd et~al.(2009)Budd, Huang, and Russell]{BHR09}
C.~Budd, W.~Huang, and R.~Russell.
\newblock Adaptivity with moving grids.
\newblock \emph{Acta Numerica}, 18:\penalty0 111--241, 2009.

\bibitem[Budd et~al.(2013)Budd, Cullen, and Walsh]{BCW13}
C.~Budd, M.~Cullen, and E.~Walsh.
\newblock Monge-{A}mp\'{e}re based moving mesh methods for numerical weather
  prediction, with applications to the eady problem.
\newblock \emph{J. Comput. Phys.}, 236\penalty0 (247-270), 2013.

\bibitem[Budd et~al.(2015)Budd, Russell, and Walsh]{BRW15}
C.~Budd, R.~Russell, and E.~Walsh.
\newblock The geometry of r-adaptive meshes generated using optimal transport
  methods.
\newblock \emph{J. Comput. Phys.}, 282:\penalty0 113--137, 2015.

\bibitem[Compo et~al.(2011)Compo, Whitaker, Sardeshmukh, Matsui, Allan, Yin,
  Gleason, Vose, Rutledge, Bessemoulin, Bronnimann, Brunet, Crouthamel, Grant,
  Groisman, Jones, Kruk, Kruger, Marshall, Maugeri, Mok, Nordli, Ross, Trigo,
  Wang, Woodruff, and Worley]{CWS11}
G.~Compo, J.~Whitaker, P.~Sardeshmukh, N.~Matsui, R.~Allan, X.~Yin, B.~Gleason,
  R.~Vose, G.~Rutledge, P.~Bessemoulin, S.~Bronnimann, M.~Brunet,
  R.~Crouthamel, A.~Grant, P.~Groisman, P.~Jones, M.~Kruk, A.~Kruger,
  G.~Marshall, M.~Maugeri, H.~Mok, O.~Nordli, T.~Ross, R.~Trigo, X.~Wang,
  S.~Woodruff, and S.~Worley.
\newblock The twentieth century reanalysis project.
\newblock \emph{Quart. J. Roy. Meteor. Soc.}, 137:\penalty0 1--28, 2011.

\bibitem[Cossette et~al.(2014)Cossette, Smolarkiewicz, and Charbonneau]{CSC14}
J.-F. Cossette, P.~Smolarkiewicz, and P.~Charbonneau.
\newblock The {Monge-Amp\'{e}re} trajectory correction for semi-{Lagrangian}
  schemes.
\newblock \emph{J. Comput. Phys.}, 274:\penalty0 208--229, 2014.

\bibitem[Cotter and Shipton(2012)]{CS12}
C.~Cotter and J.~Shipton.
\newblock Mixed finite elements for numerical weather prediction.
\newblock \emph{J. Comput. Phys.}, 231\penalty0 (21):\penalty0 7076--7091, Aug
  2012.

\bibitem[Dean and Glowinski(2006{\natexlab{a}})]{DG06}
E.~Dean and R.~Glowinski.
\newblock Numerical methods for fully nonlinear elliptic equations of the
  {Monge-Amp{\'er}e} type.
\newblock \emph{Computer methods in applied mechanics and engineering},
  195\penalty0 (13):\penalty0 1344--1386, 2006{\natexlab{a}}.

\bibitem[Dean and Glowinski(2006{\natexlab{b}})]{DG06b}
E.~Dean and R.~Glowinski.
\newblock An augmented {Lagrangian} approach to the numerical solution of the
  {Dirichlet} problem for the elliptic {Monge-Amp{\'e}re} equation in two
  dimensions.
\newblock \emph{Electronic Transactions on Numerical Analysis}, 22:\penalty0
  71--96, 2006{\natexlab{b}}.

\bibitem[Dietachmayer and Droegemeier(1992)]{DD92}
G.~Dietachmayer and K.~Droegemeier.
\newblock Application of continuous dynamic grid adaption techniques to
  meteorological modeling. {P}art {I}: Basic formulation and accuracy.
\newblock \emph{Mon. Wea. Rev.}, 120\penalty0 (8):\penalty0 1675--1706, 1992.

\bibitem[Du et~al.(1999)Du, Faber, and Gunzburger]{DFG99}
Q.~Du, V.~Faber, and M.~Gunzburger.
\newblock Centroidal {V}oronoi tessellations: Applications and algorithms.
\newblock \emph{SIAM Review}, 41\penalty0 (4):\penalty0 637--676, 1999.

\bibitem[Feng and Neilan(2009)]{FN09}
X.~Feng and M.~Neilan.
\newblock Mixed finite element methods for the fully nonlinear
  {Monge-Amp{\'er}e} equation based on the vanishing moment method.
\newblock \emph{SIAM Journal on Numerical Analysis}, 47\penalty0 (2):\penalty0
  1226--1250, 2009.

\bibitem[Fiedler and Trapp(1993)]{FT93}
B.~Fiedler and R.~J. Trapp.
\newblock A fast dynamic grid adaption scheme for meteorological flows.
\newblock \emph{Mon. Wea. Rev.}, 121\penalty0 (10):\penalty0 2879--2888, 1993.

\bibitem[Froese(2012)]{FroeseThesis2012}
Froese.
\newblock \emph{Numerical Methods for the Elliptic {Monge-A}mp\`{e}re Equation
  and Optimal Transport}.
\newblock PhD thesis, Simon Fraser University, 2012.

\bibitem[Froese and Oberman(2011)]{FO11}
B.~Froese and A.~Oberman.
\newblock Convergent finite difference solvers for viscosity solutions of the
  elliptic {Monge-Amp{\'e}re} equation in dimensions two and higher.
\newblock \emph{SIAM Journal on Numerical Analysis}, 49\penalty0 (4):\penalty0
  1692--1714, 2011.

\bibitem[Froese et~al.(2015)Froese, Oberman, and Salvador]{FOS1x}
B.~Froese, A.~Oberman, and T.~Salvador.
\newblock Numerical methods for the {2-Hessian} elliptic partial differential
  equation.
\newblock \emph{arXiv:1502.04969}, 2015.

\bibitem[Giraldo and Rosmond(2004)]{GR04}
F.~Giraldo and T.~Rosmond.
\newblock A scalable spectral element {Eulerian} atmospheric model {(SEE-AM)}
  for {NWP}: Dynamical core tests.
\newblock \emph{Mon. Wea. Rev.}, 132\penalty0 (1):\penalty0 133--153, 2004.

\bibitem[Heikes and Randall(1995)]{HR95b}
R.~Heikes and D.~Randall.
\newblock Numerical integration of the shallow-water equations on a twisted
  icosahedral grid. {P}art {II}: A detailed description of the grid and an
  analysis of numerical accuracy.
\newblock \emph{Mon. Wea. Rev.}, 123:\penalty0 1881--1997, June 1995.

\bibitem[Huang(2001)]{Hua01}
W.~Huang.
\newblock Variational mesh adaption: {Isotropy} and equidistribution.
\newblock \emph{J. Comput. Phys.}, 174\penalty0 (2):\penalty0 903--924, 2001.

\bibitem[Jacobsen et~al.(2013)Jacobsen, Gunzburger, Ringler, Burkardt, and
  Peterson]{JGR+13}
D.~Jacobsen, M.~Gunzburger, T.~Ringler, J.~Burkardt, and J.~Peterson.
\newblock Parallel algorithms for planar and spherical {Delaunay} construction
  with an application to centroidal {Voronoi} tessellations.
\newblock \emph{Geosci. Model Dev.}, 6:\penalty0 1353--1365, 2013.

\bibitem[K\"{u}hnlein et~al.(2012)K\"{u}hnlein, Smolarkiewicz, and
  D\"{o}rnbrack]{KSD12}
C.~K\"{u}hnlein, P.~Smolarkiewicz, and A.~D\"{o}rnbrack.
\newblock Modelling atmospheric flows with adaptive moving meshes.
\newblock \emph{J. Comput. Phys.}, 231\penalty0 (7):\penalty0 2741--2763, 2012.

\bibitem[Li et~al.(2001)Li, Tang, and Zhang]{LTZ01}
R.~Li, T.~Tang, and P.~Zhang.
\newblock Moving mesh methods in multiple dimensions based on harmonic maps.
\newblock \emph{J. Comput. Phys.}, 170:\penalty0 562--588, 2001.

\bibitem[McCann(2001)]{McC01}
R.~McCann.
\newblock Polar factorization of maps on {Riemannian} manifolds.
\newblock \emph{Geometric \& Functional Analysis GAFA}, 11\penalty0
  (3):\penalty0 589--608, 2001.

\bibitem[Melvin et~al.(2014)Melvin, Staniforth, and Cotter]{MSC14}
T.~Melvin, A.~Staniforth, and C.~Cotter.
\newblock A two-dimensional mixed finite-element pair on rectangles.
\newblock \emph{Quart. J. Roy. Meteor. Soc.}, 140\penalty0 (680):\penalty0
  930--942, 2014.

\bibitem[Mishra et~al.(2011)Mishra, Taylor, Nair, Lauritzen, Tufo, and
  Tribbia]{MTN+11}
S.~Mishra, M.~Taylor, R.~Nair, P.~Lauritzen, H.~Tufo, and J.~Tribbia.
\newblock Evaluation of the {HOMME} dynamical core in the aquaplanet
  configuration of {NCAR CAM4}: Rainfall.
\newblock \emph{J. Climate}, 24\penalty0 (15):\penalty0 4037--4055, 2011.

\bibitem[Oberman(2008)]{Obe08}
A.~Oberman.
\newblock Wide stencil finite difference schemes for the elliptic
  {Monge-Amp{\'e}re} equation and functions of the eigenvalues of the
  {Hessian}.
\newblock \emph{Discrete Contin. Dyn. Syst. Ser. B}, 10\penalty0 (1):\penalty0
  221--238, 2008.

\bibitem[OpenFOAM(cited 2015)]{OpenFOAM}
OpenFOAM.
\newblock The opensource {CFD} toolbox.
\newblock [Available online at \url{http://www.openfoam.org}], cited 2015.
\newblock {The OpenCFD Foundation}.

\bibitem[Ringler et~al.(2011)Ringler, Jacobsen, Gunzburger, Ju, M., and
  Skamarock]{RJG+11}
T.~Ringler, D.~Jacobsen, M.~Gunzburger, L.~Ju, D.~M., and W.~Skamarock.
\newblock Exploring a multi-resolution modeling approach within the
  shallow-water equations.
\newblock \emph{Mon. Wea. Rev.}, 139\penalty0 (11):\penalty0 3348--3368, 2011.

\bibitem[Saumier et~al.(2015)Saumier, Agueh, and Khouider]{SAK1x}
L.-P. Saumier, M.~Agueh, and B.~Khouider.
\newblock An efficient numerical algorithm for the {L2} optimal transport
  problem with applications to image processing.
\newblock \emph{submitted}, 2015.

\bibitem[Skamarock and Klemp(1993)]{SK93}
W.~Skamarock and J.~Klemp.
\newblock Adaptive grid refinement for two-dimensional and three-dimensional
  nonhydrostatic atmospheric flow.
\newblock \emph{Mon. Wea. Rev.}, 121\penalty0 (3):\penalty0 788--804, 1993.

\bibitem[Skamarock et~al.(2012)Skamarock, Klemp, Duda, Fowler, Park, and
  Ringler]{SKD+12}
W.~Skamarock, J.~Klemp, M.~Duda, L.~Fowler, S.-H. Park, and T.~Ringler.
\newblock A multi-scale nonhydrostatic atmospheric model using centroidal
  {V}oronoi tesselations and {C}-grid staggering.
\newblock \emph{Mon. Wea. Rev.}, 140\penalty0 (9):\penalty0 3090--3105, 2012.

\bibitem[Thuburn et~al.(2009)Thuburn, Ringler, Skamarock, and Klemp]{TRSK09}
J.~Thuburn, T.~Ringler, W.~Skamarock, and J.~Klemp.
\newblock Numerical representation of geostrophic modes on arbitrarily
  structured {C}-grids.
\newblock \emph{J. Comput. Phys.}, 228\penalty0 (22):\penalty0 8321--8335,
  2009.

\bibitem[Villani(2003)]{villani2003}
C.~Villani.
\newblock \emph{Topics in optimal transportation}.
\newblock Number~58. American Mathematical Soc., 2003.

\bibitem[Wang(2001)]{Wan01}
Y.~Wang.
\newblock An explicit simulation of tropical cyclones with a triply nested
  movable mesh primitive equation model: {TCM3}. {Part I}: Model description
  and control experiment.
\newblock \emph{Mon. Wea. Rev.}, 129:\penalty0 1370--1394, 2001.

\bibitem[Weller(2014)]{Wel14}
H.~Weller.
\newblock Non-orthogonal version of the arbitrary polygonal {C}-grid and a new
  diamond grid.
\newblock \emph{Geosci. Model Dev.}, 7:\penalty0 779--797, 2014.

\bibitem[Weller et~al.(2009)Weller, Weller, and Fournier]{WWF09}
H.~Weller, H.~Weller, and A.~Fournier.
\newblock Voronoi, {D}elaunay and block structured mesh refinement for solution
  of the shallow water equations on the sphere.
\newblock \emph{Mon. Wea. Rev.}, 137\penalty0 (12):\penalty0 4208--4224, 2009.

\bibitem[Wessel et~al.(2013)Wessel, Smith, Scharroo, Luis, and Wobbe]{WSS+13}
P.~Wessel, W.~Smith, R.~Scharroo, J.~Luis, and F.~Wobbe.
\newblock Generic mapping tools: Improved version released.
\newblock \emph{EOS Trans. AGU}, 94\penalty0 (45):\penalty0 409--410, 2013.
\newblock URL \url{http://gmt.soest.hawaii.edu/}.

\end{thebibliography}

\end{document}